\documentclass[11pt,leqno]{amsart}
\usepackage[utf8]{inputenc}
\usepackage{graphicx}
\usepackage[english]{babel}
\usepackage{amsmath,amssymb, hyperref}
\usepackage{enumitem}
\usepackage{mathtools}
\usepackage{xcolor}
\usepackage{thm-restate}

\usepackage{comment}
\usepackage{todonotes}

\newtheorem{theorem}{Theorem}[section]
\newtheorem{lemma}[theorem]{Lemma}
\newtheorem{proposition}[theorem]{Proposition}
\newtheorem{corollary}[theorem]{Corollary}
\newtheorem{question}{Question}
\theoremstyle{definition}
\newtheorem{definition}{Definition}[section]
\newtheorem{example}[definition]{Example}
\newtheorem{remark}[definition]{Remark}

\def\N{\mathbb{N}}
\def\Z{\mathbb{Z}}
\def\R{\mathbb{R}}

\renewcommand{\kappa }{\varkappa}
\newcommand\e{\varepsilon}

\renewcommand{\d}{{\rm d}}

\newcommand{\cS }{\mathcal S}

\begin{document}

\title{Concise formulae in groups of non-positive curvature}
\author[L. Ciobanu]{Laura Ciobanu$^1$}
\address{
$^1$Technische Universit\"at Berlin, Institut f\"ur Mathematik, 
10623 Berlin, Germany
\and Heriot-Watt University, Department of Mathematics, EH14 4AS Edinburgh, UK}
\email{$^1$ciobanu@math.tu-berlin.de, l.ciobanu@hw.ac.uk}
\author[M. Conte]{Martina Conte$^2$}
\address{
$^2$Fakult\"at f\"ur Mathematik,
Universit\"at Bielefeld,
33501 Bielefeld, Germany}
\email{$^2$mconte@math.uni-bielefeld.de}

\keywords{First-order theory, definable sets, concise words, acylindrically hyperbolic groups, infinite conjugacy classes}

\subjclass[2020]{Primary 20A15, 20F10; Secondary 03C60, 20E45, 20F36, 20F67}

\begin{abstract}
    We show that first-order formulae are concise in acylindrically hyperbolic groups and certain extensions thereof. We study further classes of groups, including Burnside groups, icc groups, groups with the `Big Powers' condition, torus knot groups and more, and prove conciseness for wide classes of formulae. 
    
    We also explore properties of definable sets in these groups, such as their finiteness, depending on the type of formula considered.
\end{abstract}

\maketitle

\section{Introduction}
Definable sets in groups, that is, those sets determined by first-order formulae, appear naturally in many algebraic, geometric and model-theoretic contexts, but many of their basic properties, such as when they are finite, are not fully understood in non-commutative infinite groups. In this paper we study conciseness, a concept that relates the finiteness of a definable set to the finiteness of the subgroup it generates. This concept has been widely studied for words, which are a special case of the more general first-order formulae we consider here.  

Let $F$ be the free group on countably many generators, and let $w =w(x_1, \dots, x_n)$ be an element of $F$. We define the \emph{verbal set} $G_w$ to be the set of all $w$\nobreakdash-values in a group~$G$ and we denote by $w(G)$ the \emph{verbal subgroup}   generated by $G_w$:
\[
	G_w = \{w(g_1, \dots, g_n) \mid g_1, \dots, g_n \in G \} \quad\text{and}\quad w(G) = \langle G_w \rangle.
\]
A word $w$ is \textit{concise} in a class of groups $\mathcal{C}$ if, whenever $G$ is a group in $\mathcal{C}$ and $G_w$ is finite, then $w(G)$ is also finite. Concise words were introduced by P. Hall, who conjectured that every word is concise in the class of all groups. The conjecture was disproved by Ivanov \cite{Iva90}, but the question remains open whether every word is concise in the class of residually finite groups.

While many papers have been dedicated to the topic of concise words, the purpose of this work is to explore a generalisation introduced in \cite{CP} which concerns concise first-order formulae in the language of groups (from now on simply formulae); see Section \ref{sec: formulae, preliminaries} for the definitions.
In loc.~cit.~the question is posed whether every formula is concise in the class of residually finite groups and some partial results are provided towards a positive answer to this question. For example, it is shown that every formula is concise in the class of abelian groups and that every existential formula is concise in finitely generated torsion-free nilpotent groups of class 2. 
It is a well-known open question in geometric group theory whether every hyperbolic group is residually finite.
In Section \ref{sec:AH} we prove:

\begin{restatable}{thm}{AHgroups}
\label{thm: all formulae are concise in AH groups}
 All formulae are concise in the class of acylindrically hyperbolic groups.  
\end{restatable}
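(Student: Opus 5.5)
The plan is to show that a finite definable set can only contain elements with finite conjugacy classes, and that in an acylindrically hyperbolic group all such elements lie in a finite normal subgroup. I work with the notion of conciseness from \cite{CP} as recalled in Section~\ref{sec: formulae, preliminaries}: for a formula $\phi(x_1,\dots,x_n)$ without parameters, $G_\phi\subseteq G^n$ is the set of tuples satisfying $\phi$ in $G$. The subgroup $\phi(G)$ is generated by all coordinates of tuples in $G_\phi$. Conciseness asks that $\phi(G)$ be finite whenever $G_\phi$ is finite.

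\textbf{Step 1 (invariance).} I will observe that, since $\phi$ has no parameters, $G_\phi$ is invariant under every automorphism of $G$, and in particular under inner automorphisms. So if $G_\phi$ is finite, the set $S$ of all coordinates of tuples in $G_\phi$ is a finite subset of $G$ closed under conjugation. Hence every element of $S$ has finite conjugacy class, i.e.\ $S$ lies in the FC-centre $FC(G)=\{g\in G : |g^G|<\infty\}$, which is a characteristic subgroup of $G$. It then suffices to prove that $FC(G)$ is finite whenever $G$ is acylindrically hyperbolic, since then $\phi(G)=\langle S\rangle\le FC(G)$ is finite.

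\textbf{Step 2 ($FC(G)$ is finite).} I will use two standard facts due to Osin and Dahmani--Guirardel--Osin.
\begin{enumerate}
\item[(a)] Every infinite normal subgroup of an acylindrically hyperbolic group $G$ contains a loxodromic WPD element for a suitable acylindrical action of $G$ on a hyperbolic space.
\item[(b)] For such an element $g$, the centraliser $C_G(g)$ lies in the maximal elementary subgroup $E(g)$, which is virtually cyclic.
\end{enumerate}
Suppose $FC(G)$ were infinite. By (a) it contains a loxodromic element $g$. Since $g^G$ is finite, $C_G(g)$ has finite index in $G$. By (b), $C_G(g)\le E(g)$, so $G$ would be virtually cyclic. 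This contradicts the definition of acylindrically hyperbolic groups, which are not virtually cyclic. Hence $FC(G)$ is finite, and being a finite normal subgroup it is contained in the finite radical $K(G)$.

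\textbf{Main obstacle.} I expect the only delicate point to be making sure the definitional framework matches. First, Step 1 needs the formulae to be parameter-free, which I believe is the convention in \cite{CP}. Second, if $G_\phi$ is defined there for a single free variable rather than as a set of tuples, the argument is the same with $S=G_\phi$. The group-theoretic input in Step 2 is standard, so I would cite it rather than reprove it. One could also cite directly the known result that the FC-centre (the amenable radical suffices) of an acylindrically hyperbolic group is finite.
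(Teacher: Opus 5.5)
Your proof is correct, but it takes a different route from the paper's. The paper passes to the quotient $G/K(G)$. This quotient is acylindrically hyperbolic with trivial finite radical by \cite[Lemma 5.10]{H}, hence icc by \cite[Theorem 2.35]{DGO}. From this the paper deduces that elements of infinite order have infinite conjugacy classes, so a finite $G_\varphi$ consists of torsion elements, and Schur reduction (Dietzmann's lemma) then makes $\varphi(G)$ finite.

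You instead show directly that $\mathrm{FC}(G)$ is finite. You use two facts: infinite normal subgroups contain loxodromic WPD elements, and centralisers of such elements lie in the virtually cyclic subgroup $E(g)$. Since $\varphi(G)\le \mathrm{FC}(G)$, you need neither a case distinction by order nor Dietzmann's lemma. You also get the sharper statement $\varphi(G)\le \mathrm{FC}(G)=K(G)$ whenever $G_\varphi$ is finite. The paper's quotient argument in fact yields this too, since any $g\notin K(G)$ maps to a non-trivial element of the icc group $G/K(G)$; the paper just does not phrase it that way.

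Your route is closer to first principles: it essentially reproves the icc criterion of \cite{DGO} from the WPD machinery. The paper's route buys modularity. It isolates the property ``every infinite-order element has infinite conjugacy class'' and the general criterion of Corollary~\ref{cor:infconjclass}, which it reuses for extensions and for the products $A\times K$ in Section~\ref{sec: split central ext}.
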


We then focus on existential formulae, which are the most natural generalisation of words to formulae, and prove that they are always concise in groups embedding ``nicely'' as subgroups of finite index in direct products of torsion-free acylindrically hyperbolic groups with torsion-free abelian groups (see Proposition \ref{prop: existential formulae are concise in groups embedding nicely in AxK}). This result directly implies:

\begin{restatable}{corollary}{existentialdirectproduct}
    Existential formulae are concise in any direct product $A\times\nobreak K$, where $A$ is a torsion-free abelian group and $K$ is a direct product of torsion-free acylindrically hyperbolic groups.
\end{restatable}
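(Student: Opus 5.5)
This corollary should follow by applying Proposition~\ref{prop: existential formulae are concise in groups embedding nicely in AxK} with the embedding taken to be the identity. Write $K = K_1\times\cdots\times K_m$ with each $K_i$ torsion-free acylindrically hyperbolic, and let $G = A\times K$. If the proposition is stated only for a single torsion-free acylindrically hyperbolic factor, I will first upgrade it to a finite product $K_1\times\cdots\times K_m$. The plan is to take $G$ itself as a finite-index (index one) subgroup of $A\times K$ and check that the inclusion $G\hookrightarrow A\times K$ satisfies the ``nice'' hypotheses. Presumably these hypotheses are that the projections to each factor are surjective or have finite-index image, and that the intersections with the factors are suitably large. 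For the identity map these conditions hold trivially, since every projection is onto and every factor is contained in $G$.

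If instead the proposition requires a single acylindrically hyperbolic factor, I will argue directly. The approach is the following.

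\begin{enumerate}
\item Let $\phi(x_1,\dots,x_n)$ be existential and suppose the definable set $G_\phi$ is finite. I will show it equals $\{1\}$; then $\langle G_\phi\rangle$ is trivial, hence finite.

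\item Since $G$ is torsion-free, every finite subgroup is trivial. So it suffices to show that $G_\phi$ finite implies $G_\phi\subseteq\{1\}$. Note that finiteness of $G_\phi$ alone does not give this, since a finite subset of a torsion-free group need not consist of the identity; the argument has to use the structure of $G$.

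\item The key step is a ``spreading'' argument. Take $g=(a,k_1,\dots,k_m)\in G_\phi$ with witnesses $\bar h$. Existential formulae are preserved under embeddings, and in particular under automorphisms and under passage to supergroups. So the image of $g$ under any automorphism of $G$ preserving the witnessing configuration again lies in $G_\phi$. In the abelian factor $A$ (torsion-free) I would use the automorphisms $x\mapsto x^{N}$. These are not automorphisms but injective endomorphisms, which still preserve existential truth. In each $K_i$ I would use conjugation, which is an inner automorphism.

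\item Under conjugation by $K_i$, a nontrivial component $k_i$ has an infinite conjugacy class, because acylindrically hyperbolic groups are icc (their finite radical is trivial when they are torsion-free). This produces infinitely many elements of $G_\phi$ unless $k_i=1$.

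\item For the $A$-component I will use the injective endomorphism $\mathrm{id}_K\times(x\mapsto x^N)$. It maps $g$ to $(a^N,k_1,\dots,k_m)$, which is again in $G_\phi$. If $a\neq1$, torsion-freeness makes these elements pairwise distinct.
\end{enumerate}

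Combining steps (4) and (5), finiteness of $G_\phi$ forces $G_\phi\subseteq\{1\}$, as required.

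The main obstacle is the mixing between the factors. Conjugating only the $K_i$-coordinate while fixing everything else is an automorphism of the product, so that part is fine. Raising to the $N$th power only in $A$ is an injective endomorphism of the product, also fine. I expect this to be routine once it is verified that existential formulae are preserved by injective homomorphisms, $\psi(\bar h)$ witnessing $\phi(\psi(g))$. That verification is the step I would check most carefully, in particular that negated atomic subformulae survive because the map is injective.
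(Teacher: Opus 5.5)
Your direct argument (steps 1--5) is correct and self-contained. Conjugation-invariance of $G_\phi$ rules out any $g\in G_\phi$ with a non-trivial $K$-coordinate, because each $K_i$ is icc: it is torsion-free, so its finite radical is trivial and Theorem~\ref{main:AH}(1) applies. For the $A$-coordinate, the map $\psi_N\colon (a,k)\mapsto(a^N,k)$ is an injective endomorphism of $A\times K$. So if $\overline{h}$ witnesses $\phi(g)$, then $\psi_N(\overline{h})$ witnesses $\phi(\psi_N(g))$: equalities survive because $\psi_N$ is a homomorphism, and inequalities survive because it is injective. For $a\neq 1$ the elements $\psi_N(g)$ are pairwise distinct. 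Hence a finite $G_\phi$ is contained in $\{1\}$.

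Your first route is not a valid verification, because the hypotheses you guess for Proposition~\ref{prop: existential formulae are concise in groups embedding nicely in AxK} are not the ones it states. It actually requires that $G$ have finite index in $A\times K$, that every non-central element of infinite order have infinite conjugacy class, and that $\mathrm{Z}(G)$ embed in $A$. The paper's proof of the corollary is exactly this check for the identity embedding, via Corollary~\ref{cor: existential is concise in split central extension}: $K$ is icc, so $\mathrm{Z}(A\times K)=A$, and any $(a,k)$ with $k\neq 1$ has infinite class.

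Your $\psi_N$ is the same device as the substitution $\overline{z}\,\overline{v}\mapsto\overline{z}^N\overline{v}$ in the proof of that proposition. Phrasing it as preservation of existential formulae under self-embeddings has two advantages: it handles equalities and inequalities at once, and it avoids first reducing to central $g$ and rewriting each conjunct as $g^{m_i}\#_i w_i(\overline{h})$. The paper's hands-on version is built for proper finite-index subgroups $G<A\times K$, which are needed for torus knot groups; there one must restrict to exponents for which the substituted witnesses stay in $G$. For the full direct product, your formulation is the cleaner one.
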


Moreover, the same result allows us to prove the following (see Section~\ref{sec: dihedral artin groups})

\begin{restatable}{corollary}{torusknotgroups}
Existential formulae are concise in torus knot groups.
\end{restatable}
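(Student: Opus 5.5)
Torus knot groups are $G_{p,q}=\langle a,b \mid a^p=b^q\rangle$ with $p,q\ge 2$ coprime. The plan is to realise $G_{p,q}$ as a group that embeds ``nicely'' as a finite index subgroup of a product $A\times K$, where $A$ is torsion-free abelian and $K$ is a torsion-free acylindrically hyperbolic group. Once this is done, Proposition~\ref{prop: existential formulae are concise in groups embedding nicely in AxK} applies directly.

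The centre of $G_{p,q}$ is $Z=\langle z\rangle\cong\Z$ with $z=a^p=b^q$, and $G_{p,q}/Z\cong \Z/p * \Z/q$. The quotient $\Z/p*\Z/q$ is virtually free, hence hyperbolic, but it has torsion. So I will pass to a finite index subgroup. Let $N$ be the kernel of the natural map $\Z/p*\Z/q\to \Z/p\times\Z/q$. This $N$ is free (it is the commutator subgroup) of rank $(p-1)(q-1)\ge 2$, so it is torsion-free and non-elementary hyperbolic, hence acylindrically hyperbolic. Let $\tilde N\le G_{p,q}$ be its preimage. Then $\tilde N$ is a central extension $1\to Z\to\tilde N\to N\to 1$. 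Since $N$ is free, the extension splits, and because $Z$ is central this gives $\tilde N\cong Z\times N\cong \Z\times F_{(p-1)(q-1)}$, a subgroup of index $pq$ in $G_{p,q}$.

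It remains to embed $G_{p,q}$ itself into a product $A\times K$ so that $\tilde N$ has finite index in the image, in whatever sense the nice embedding of Proposition~\ref{prop: existential formulae are concise in groups embedding nicely in AxK} requires. The natural candidate is $\phi=(\alpha,\pi)$ with:
\begin{itemize}
\item $\alpha: G_{p,q}\to\Q$ given by $a\mapsto 1/p$ and $b\mapsto 1/q$, which restricts to an injection on $Z$ (it sends $z\mapsto 1$);
\item $\pi: G_{p,q}\to \Z/p*\Z/q$ the quotient map.
\end{itemize}
Then $\ker\phi\subseteq Z\cap\ker\alpha=1$, so $\phi$ is injective. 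However, $\Z/p*\Z/q$ has torsion, so $K$ cannot simply be $\Z/p*\Z/q$. If the proposition only asks for the finite index subgroup to split nicely, I would take $A=\Z$ (or $\Q$) and $K=N$ and use the embedding of $\tilde N\cong Z\times N$. I would then check the hypotheses of the proposition, typically that $G_{p,q}$ contains $A'\times K'$ with finite index and $A'\le A$, $K'\le K$ of finite index.

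The main obstacle is matching the precise definition of ``embedding nicely'' used in the proposition, which is stated later in the paper. If it requires $G$ itself, rather than a finite index subgroup, to sit inside $A\times K$ with $K$ torsion-free, I would replace $\Z/p*\Z/q$ by a torsion-free acylindrically hyperbolic group containing it. Failing that, I would induce up, using $\operatorname{Ind}$-type embeddings of $G_{p,q}$ into $\Z\wr(\text{finite})$-style products $(\Z\times N)^{pq}\rtimes S_{pq}$, and check that conciseness is preserved.
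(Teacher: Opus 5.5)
\textbf{There is a genuine gap.} The step you flag as ``the main obstacle'' is never resolved, and each of your fallbacks fails. The obstacle itself comes from reading a torsion-free acylindrically hyperbolic requirement on $K$ into Proposition~\ref{prop: existential formulae are concise in groups embedding nicely in AxK}. That phrasing appears in the introduction, but the statement imposes nothing on $K$. Its hypotheses are only these: non-central elements of infinite order have infinite conjugacy class; $G$ is a finite-index subgroup of $A\times K$ with $A$ torsion-free abelian; and $\mathrm{Z}(G)$ lies in $A$.

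Under your stricter reading the strategy is actually impossible. If $G\le A\times K$ with $\mathrm{Z}(G)\subseteq A$, then the kernel $G\cap A$ of the projection to $K$ is central and contains $\mathrm{Z}(G)$. Hence $\Z/p*\Z/q\cong G/\mathrm{Z}(G)$ embeds in $K$, so $K$ must have torsion. In particular, no torsion-free group can contain $\Z/p*\Z/q$.

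Your fallbacks do not work either:
\begin{itemize}
\item The detour through $\tilde N\cong\Z\times F_{(p-1)(q-1)}$ does not help. The proposition needs $G$ itself to sit with finite index in $A\times K$, because its proof decomposes the witnesses $\overline{h}\in G^k$ as $\overline{z}\,\overline{v}$ and rescales the abelian part. Conciseness in $\tilde N$ says nothing about $G_\varphi$, whose witnesses range over all of $G$.
\item The induced wreath-product idea, with ``check that conciseness is preserved'', just restates the problem.
\end{itemize}

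\textbf{How to repair it.} Keep the map $\phi=(\alpha,\pi)$ you already wrote down; it is exactly the paper's embedding $\iota$. But take $A=\alpha(G)=\tfrac{1}{pq}\Z\cong\Z$ instead of $\mathbb{Q}$. Since $\phi(G)\subseteq\tfrac{1}{pq}\Z\times K$, it has infinite index in $\mathbb{Q}\times K$. Rescaled by $pq$, the map reads $a\mapsto(q,\bar a)$, $b\mapsto(p,\bar b)$ in $\Z\times(\Z/p*\Z/q)$.

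You then need to check three things:
\begin{itemize}
\item \emph{Finite index.} Since $\pi$ is onto and $\phi(G)\cap(\Z\times 1)=\phi(\mathrm{Z}(G))=pq\Z\times 1$, the index is $pq$.
\item \emph{Centre in $A$.} $\mathrm{Z}(G)$ lands in the $\Z$ factor.
\item \emph{Conjugacy hypothesis.} Your proposal never mentions this one. $G/\mathrm{Z}(G)\cong\Z/p*\Z/q$ is icc: it is a free product of non-trivial groups and, by coprimality, is not $D_\infty$. Hence every non-central element of $G$ has infinite conjugacy class.
\end{itemize}

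With these in place the proposition applies and the corollary follows. The torsion in $K$ is harmless: the proof uses only torsion-freeness of $A$ (to pass from $g^{m_iN}=w_i(\overline{z})^N$ to $g^{m_i}=w_i(\overline{z})$) and finite index (to keep $\overline{z}^N\overline{v}$ in $G$), never any property of $K$.
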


One feature of verbal sets of non-trivial words in acylindrically hyperbolic groups is that they are infinite (see Remark \ref{rmk: verbal sets in AH groups are infinite}). Starting from this observation, we investigate the finiteness of more general definable sets. Here the situation differs substantially from the verbal case. If one allows for torsion, instances of finite definable sets can be easily found. However, there are examples of complex formulae that define the trivial set in whole classes of torsion-free groups. In Section \ref{sec: trivial formulae in AH groups} we focus on formulae that define the trivial set in torsion-free acylindrically hyperbolic groups and, more specifically, in free groups, and give some examples of those.      

Finally, in Sections \ref{sec: groups with BP} and \ref{sec: icc groups} we look at conciseness in other related classes of groups, namely groups with the `Big Powers' condition (see Definition \ref{def: alternative BPC}) and icc groups, i.~e. groups in which every non-trivial conjugacy class is infinite.

\subsection*{Notation} 

If not otherwise specified, given a natural number $n$, $F_n$ will denote the free group on $n$ generators and $F$ the free group on countably many generators.
The centre of a group $G$ is denoted by $\mathrm{Z}(G)$.
If $g$ is an element of a group $G$ we denote by $[g]$ its conjugacy class. 
For any positive integer $n$, $\Z/n\Z$ is the cyclic group of order $n$.

\subsection*{Acknowledgments} The authors thank Simon Andr\'e for helpful discussions, and thank Jan Moritz Petschick for his proof that weakly branch groups are icc.

The first author benefited from the support provided by the Deutsche Forschungsgemeinschaft (DFG, German Research Foundation) under Germany's Excellence Strategy -- The Berlin Mathematics Research Center MATH+ (EXC-2046/1, EXC-2046/2, project ID: 390685689).
The second author was funded by the Deutsche Forschungsgemeinschaft (DFG, German Research Foundation) – Project-ID~491392403 – TRR~358. 

\section{Preliminaries and basic results} 

\subsection{Formulae}
\label{sec: formulae, preliminaries}

Here we define formulae in the language of groups. Recall that the language of groups is given by $\mathcal{L}_{\text{gp}}=\lbrace 1, \cdot, ^{-1}\rbrace.$ For our purposes we regard a first-order formula in the language of groups as a finite string of symbols built using the symbols of $\mathcal{L}_{\text{gp}}$, variable symbols, the equality symbol ($=$), the connectives ``and'' ($\wedge$), ``or'' ($\vee$), ``not'' ($\neg$), the existential quantifier ($\exists$), the universal quantifier ($\forall$) and parentheses.

If the universal (resp.~existential) quantifier is the only quantifier occurring in a formula $\varphi$, one says that $\varphi$ is a universal (resp.~existential) formula.

A variable in a formula is said to be \textit{free} if it is not bound to any quantifier. A \textit{sentence} is a formula without free variables. The \textit{(first-order) theory} of a group $G$ is the set of sentences that are satisfied in $G$. If two groups $G_1$ and $G_2$ satisfy the same sentences (resp.~the same universal sentences), one says that $G_1$ and $G_2$ are \textit{elementarily equivalent} (resp.~\textit{universally equivalent}).  

We will simply call \emph{formula} a first-order formula in the language of groups with one free variable and without parameters (i.e., when evaluating formulae in a group $G$, no constants from $G$ are allowed). If $\varphi$ is a formula and $x$ is a free variable, we write $\varphi(x)$. In case we need to specify the variables $\overline{y}$ occurring in $\varphi$ that are bound to a quantifier, we will write $\varphi(x;\overline{y})$. 
Therefore, in our setting a formula $\varphi(x)$ is equivalent to one given by an expression of the following form:
\begin{equation}
\label{eq: general form of a formula}
\varphi(x):=\overline{Q}\ \overline{y}:\bigvee_{j=1}^m \left(w_{j, 1}(x,\overline{y})\#_{j,1} 1\wedge\cdots\wedge w_{j,k_j}(x,\overline{y})\#_{j,k_j}1\right),
\end{equation}
where $\overline{Q}$ is any string of quantifiers in $\lbrace\forall,\exists\rbrace$, every $w_{j,i}$ is a word, $m$ and the $k_j$'s are non-negative integers, and $\#_{j,i}\in \lbrace =, \neq\rbrace$.
We make the convention that the string $\overline{y}$ can be empty and that every element of such string is bound to exactly one quantifier.

A \emph{positive} formula is a formula where no inequalities occur, i.e., using the same notation as before, a formula equivalent to an expression of the form
\begin{equation}
\label{eq: general form of a positive formula}
\varphi(x):=\overline{Q}\ \overline{y}:\bigvee_{j=1}^m \left(w_{j,1}(x,\overline{y})= 1\wedge\cdots\wedge w_{j,k_j}(x,\overline{y})=1\right).
\end{equation}

\noindent The set of positive sentences satisfied by a group $G$ form the \textit{positive (first-order) theory} of $G$.

Given a group $G$, the set defined by a formula $\varphi$ is given by 
$$G_\varphi:=\lbrace g\in G\mid \varphi(g) \ \text{is true} \rbrace.$$ 
Following \cite{CP}, we denote by $\varphi(G)$ the group generated by $G_\varphi$ and we refer to it as the \emph{logical subgroup} of $\varphi$.
\begin{definition}
    A formula $\varphi$ is \emph{concise} in a class of groups $\mathcal{C}$ if, whenever $G$ is a group in $\mathcal{C}$, the finiteness of $G_\varphi$ implies the finiteness of $\varphi(G)$. 
\end{definition}

Note that any word $w\in F_n$ can be regarded as an existential formula $\hat{w}(x):=\exists y_1,\ldots, y_n\colon x=w(y_1,\ldots, y_n)$ which, by slight abuse of notation, we will denote again by $w$, and which we will call the \textit{word formula associated to $w$}. Hence we will identify the verbal set $G_w$ with the set defined by this formula and the corresponding verbal subgroup with the respective logical subgroup.

Following \cite[Section 2.1]{MN}, we say that a word $w$ is \emph{proper} if there exist groups $G$ and $H$ such that $G_w\neq 1$ and $H_w\neq H$.

\subsection{Schur's Lemma and infinite conjugacy classes}
\label{subsect: prelim: schur's lemma and icc}

One of the key tools to show conciseness of words and, more generally, formulae, is to use the existence of (some) infinite conjugacy classes in a group.

We collect some basic but key observations below.

\begin{lemma} Let $G$ be a group and $\varphi$ a formula. 
    \begin{itemize}
    \item[1.] The set $G_\varphi$ defined by $\varphi$ is closed under conjugation. 
    \item[2.] The logical subgroup $\varphi(G)$ is normal.
    \end{itemize}
\end{lemma}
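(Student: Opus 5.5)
The plan is to derive both items from a single observation: for every $g\in G$, conjugation $c_g\colon G\to G$, $h\mapsto ghg^{-1}$, is an automorphism of $G$. Truth of parameter-free first-order formulae is preserved under automorphisms. Since the formulae we consider have no constants from $G$, this immediately gives that $\varphi(h)$ holds if and only if $\varphi(c_g(h))$ holds.

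For item 1, I would prove the following by induction on the complexity of a formula $\psi(x_1,\dots,x_k)$, here allowing several free variables: for any automorphism $\alpha$ of $G$ and any tuple $\bar h\in G^k$, $\psi(\bar h)$ holds in $G$ iff $\psi(\alpha(\bar h))$ holds.

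The base case covers atomic formulae $w(\bar x)=1$ with $w$ a word. There we use $\alpha(w(\bar h))=w(\alpha(\bar h))$, because $\alpha$ is a homomorphism, together with injectivity of $\alpha$, which gives $w(\bar h)=1 \iff \alpha(w(\bar h))=1$. The steps for $\neg,\wedge,\vee$ are immediate. For a quantifier step, say $\exists y\,\psi(\bar x,y)$, a witness $y=k$ for $\bar h$ yields the witness $\alpha(k)$ for $\alpha(\bar h)$. Conversely, a witness $k'$ for $\alpha(\bar h)$ yields the witness $\alpha^{-1}(k')$ for $\bar h$, which uses that $\alpha$ is bijective. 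The universal quantifier is handled dually, or via $\forall=\neg\exists\neg$. Alternatively, one can work directly with the normal form (\ref{eq: general form of a formula}) and induct on the length of the quantifier string $\overline{Q}$. Applying this to $\alpha=c_g$ gives $gG_\varphi g^{-1}\subseteq G_\varphi$ for all $g$, and hence equality.

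For item 2, the logical subgroup $\varphi(G)$ consists of finite products $s_1^{\pm1}\cdots s_r^{\pm1}$ with $s_i\in G_\varphi$. Since $c_g$ is a homomorphism, $c_g$ sends such a product to $c_g(s_1)^{\pm1}\cdots c_g(s_r)^{\pm1}$, and each $c_g(s_i)$ lies in $G_\varphi$ by item 1. So $g\varphi(G)g^{-1}\subseteq\varphi(G)$ for all $g$, i.e.\ $\varphi(G)$ is normal.

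There is no real obstacle here. The only point needing care is the quantifier step, where surjectivity of $c_g$ is used to transport witnesses in both directions. It is also essential that $\varphi$ has no parameters, since a constant from $G$ would not in general be fixed by $c_g$.
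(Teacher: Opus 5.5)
Your proof is correct. The paper gives no proof of this lemma, presenting it only as a basic observation, and your argument is exactly the standard justification it implicitly relies on: inner automorphisms preserve the truth of parameter-free formulae (by induction on complexity, using bijectivity at the quantifier steps), and the subgroup generated by a conjugation-invariant set is normal.
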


In light of the following result (see \cite{CP}, Lemma 2.6), also known as Dietzmann's Lemma (see \cite[Corollary 2 to Lemma 2.14]{Rob72}), in order to test if a formula $\varphi$ is concise in a group $G$, it is enough to consider the case when $G_\varphi$ contains elements of infinite order. 

\begin{lemma}[Schur reduction]\label{lem:schur_reduction}
	Let $S$ be a finite normal set in $G$. The subgroup $H = \langle S \rangle$ is finite if and only if the image of $s$ in $H/H'$ has finite order for all $s \in S$.
\end{lemma}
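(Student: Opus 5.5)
The plan is to prove the two implications separately, with the forward one immediate and the reverse one via Schur's theorem on central-by-finite groups.

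\emph{Forward direction.} If $H=\langle S\rangle$ is finite, then $H/H'$ is finite. Hence every element of $H/H'$, in particular the image of each $s\in S$, has finite order.

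\emph{Reverse direction: reduction to Schur.} Since $S$ is finite and closed under conjugation, each $s\in S$ has only finitely many $G$-conjugates. So $C_G(s)$ has finite index in $G$. Put $C=\bigcap_{s\in S}C_G(s)$, a finite intersection of finite-index subgroups, hence of finite index in $G$. An element of $H\cap C$ commutes with every generator of $H$, so $H\cap C\le \mathrm{Z}(H)$. Thus $\mathrm{Z}(H)$ has finite index in $H$. Equivalently, $G$ acts on the finite set $S$ by conjugation, and the kernel restricted to $H$ is central in $H$.

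By Schur's theorem, a group whose centre has finite index has finite derived subgroup. Hence $H'$ is finite.

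\emph{Reverse direction: finiteness of $H/H'$.} The group $H/H'$ is abelian and generated by the images of the finitely many elements of $S$. By hypothesis each of these images has finite order. A finitely generated abelian group generated by torsion elements is finite, so $H/H'$ is finite. Combined with $H'$ finite, this gives $|H|=|H/H'|\cdot|H'|<\infty$.

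The only non-elementary ingredient is Schur's theorem (centre of finite index implies finite derived subgroup), which I will quote, e.g.\ from \cite{Rob72}. The main point requiring care is checking that $H\cap C$ is central in $H$ and not merely in some larger subgroup. This holds because $H$ is generated by $S$, and $C$ centralises every element of $S$.
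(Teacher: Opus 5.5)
Your proof is correct. The paper gives no argument of its own here; it only cites \cite{CP} (Lemma 2.6) and Dietzmann's Lemma in \cite{Rob72}. Your chain is exactly the standard proof behind those references: finite conjugacy classes give finite-index centralisers, hence $|H:\mathrm{Z}(H)|<\infty$; Schur's theorem then gives $|H'|<\infty$; and $H/H'$ is a finitely generated abelian group generated by torsion elements, so it is finite. Because Schur's theorem needs no torsion hypothesis on $S$, your argument also covers the stated form, which is slightly stronger than the classical Dietzmann Lemma.
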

This has the following consequence.

\begin{corollary}\label{cor:infconjclass}
    Let $G$ be a group in which every infinite order element has infinite conjugacy class.
    Then every formula is concise in $G$.
\end{corollary}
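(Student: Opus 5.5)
The plan is to deduce the corollary directly from Lemma~\ref{lem:schur_reduction}. Let $\varphi$ be a formula, and suppose $G_\varphi$ is finite. We must show that $\varphi(G)$ is finite.

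First, I will use that $G_\varphi$ is closed under conjugation (part~1 of the preceding lemma). This makes $S := G_\varphi$ a finite normal subset of $G$, and $H := \langle S\rangle = \varphi(G)$. By Lemma~\ref{lem:schur_reduction}, it is enough to show that the image of each $s\in S$ in $H/H'$ has finite order. I will in fact show the stronger statement that every $s\in S$ has finite order in $G$.

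Take $s\in S$. Its conjugacy class $[s]$ lies in $S$, because $S$ is closed under conjugation, and $S$ is finite. Hence $[s]$ is finite. By the hypothesis on $G$, an element of infinite order has an infinite conjugacy class. So $s$ cannot have infinite order, i.e.\ $s$ has finite order, say $s^n=1$. Then $(sH')^n = H'$ in $H/H'$, so the image of $s$ has finite order as well.

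Applying Lemma~\ref{lem:schur_reduction} now gives that $H=\varphi(G)$ is finite. Since $\varphi$ was arbitrary, every formula is concise in $G$.

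I do not see a real obstacle here. The one point to check is that Lemma~\ref{lem:schur_reduction} is applied with $S$ a finite normal set, and this is exactly what closure of $G_\varphi$ under conjugation supplies. If $G_\varphi$ is empty, then $\varphi(G)$ is trivial and the statement holds trivially.
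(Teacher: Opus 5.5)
Your proposal is correct and is essentially the paper's argument. Since $G_\varphi$ is closed under conjugation, every element of a finite $G_\varphi$ has finite conjugacy class and hence finite order. Lemma~\ref{lem:schur_reduction} then gives that $\varphi(G)$ is finite.
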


\begin{proof}
  Let $\varphi$ be a formula and assume that $G_\varphi$ is finite. 
  If $G_\varphi$ contains some element $g$ of infinite order, then the entire conjugacy class of $g$ must be contained in $G_\varphi$ as well. Since the conjugacy class of $g$ is infinite, this leads to a contradiction.
  Hence all elements in $G_\varphi$ have finite order, and $\varphi(G)$ is finite by Lemma \ref{lem:schur_reduction}.  
\end{proof}

Given positive integers $d$ and $n$, the Burnside group $\mathrm{B}(d,n)$ is defined as the quotient of the free group on $d$ generators by the normal subgroup generated by all the $n$th powers. A Burnside group is a group that has this form for some $d$ and $n$. From Lemma \ref{lem:schur_reduction} it immediately follows:

\begin{corollary}
    Every formula is concise in torsion 
    (infinite) groups.
    In particular, every formula is concise in any Burnside group.
\end{corollary}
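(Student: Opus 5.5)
The plan is to reduce directly to the Schur reduction, Lemma \ref{lem:schur_reduction}, since in a torsion group the hypothesis of that lemma holds automatically. Let $G$ be a torsion group, $\varphi$ a formula, and suppose $G_\varphi$ is finite. I will take $S = G_\varphi$ and $H = \langle S\rangle = \varphi(G)$.

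First I check that $S$ is a finite normal set. Finiteness is the hypothesis. Closure under conjugation is item~1 of the first lemma of Section~\ref{subsect: prelim: schur's lemma and icc}: conjugation by any $g\in G$ is an automorphism of $G$, and $\varphi$ has no parameters, so automorphisms preserve $G_\varphi$. Next I verify the hypothesis of Lemma \ref{lem:schur_reduction}. Every $s\in S$ lies in the torsion group $G$, so $s^n=1$ for some $n\ge 1$. Hence its image $sH'$ in $H/H'$ also satisfies $(sH')^n = 1$ and has finite order. Lemma \ref{lem:schur_reduction} then gives that $H=\varphi(G)$ is finite, so $\varphi$ is concise in $G$.

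Equivalently, one can cite Corollary \ref{cor:infconjclass}: a torsion group has no elements of infinite order, so its hypothesis holds vacuously.

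For the second assertion, it suffices to note that $\mathrm{B}(d,n)$ is a torsion group, since every element is the image of some $u\in F_d$ and $u^n$ lies in the normal subgroup we quotient by. The first part then applies. The word ``(infinite)'' in the statement needs no separate treatment: for a finite group $G$, the subgroup $\varphi(G)$ is trivially finite.

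There is no real obstacle here. The only point needing care is normality of $G_\varphi$, which relies on formulae being parameter-free, and that is built into the paper's convention.
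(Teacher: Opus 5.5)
Your proposal is correct and follows the paper's route: the paper derives this corollary immediately from Lemma~\ref{lem:schur_reduction}. As you note, in a torsion group every element of the finite normal set $G_\varphi$ has finite order, and Burnside groups are torsion.
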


 A group is called \emph{icc} (which stands for `infinite conjugacy classes') if every (non-trivial) conjugacy class is infinite.

For a group $G$ and an element $g \in G$, denote by $[g]$ the conjugacy class of $g$. In any group $G$ and for any formula $\varphi$, if $g\in G_\varphi$ then its conjugacy class $[g]$ is also in $G_\varphi$, so if $G$ is icc then $G_\varphi$ is either trivial or infinite. This immediately implies

\begin{corollary}\label{icc:concise}
     All formulae are concise in icc groups. 
\end{corollary}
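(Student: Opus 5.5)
The argument uses only that $G_\varphi$ is closed under conjugation (part 1 of the first lemma in this subsection) together with the icc hypothesis. Let $G$ be an icc group and $\varphi$ a formula, and suppose $G_\varphi$ is finite. We must show that $\varphi(G)$ is finite.

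Take any $g \in G_\varphi$. Since $G_\varphi$ is closed under conjugation, the whole conjugacy class $[g]$ lies in $G_\varphi$, so $[g]$ is finite. In an icc group every non-trivial conjugacy class is infinite, hence $g = 1$. Thus $G_\varphi \subseteq \{1\}$, that is, $G_\varphi$ is either empty or equal to $\{1\}$. In both cases the subgroup it generates is trivial, so $\varphi(G) = 1$ is finite, and $\varphi$ is concise in $G$.

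Equivalently, one could invoke Corollary \ref{cor:infconjclass}. In an icc group every non-trivial element, and in particular every element of infinite order, has an infinite conjugacy class. This route is unnecessary, however, since the direct argument above even yields $\varphi(G)=1$ without appealing to Lemma \ref{lem:schur_reduction}.

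There is no real obstacle here. The only points to check are that closure of $G_\varphi$ under conjugation holds for arbitrary formulae without parameters, which is already recorded in the paper, and that the degenerate case $G_\varphi=\emptyset$ is handled. The latter is covered by the convention that the empty set generates the trivial subgroup.
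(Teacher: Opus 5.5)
Your proof is correct and follows essentially the paper's own argument: since $G_\varphi$ is closed under conjugation, in an icc group it is either contained in $\{1\}$ or infinite, so finiteness forces $\varphi(G)$ to be trivial. Your handling of the empty case and your remark that Corollary~\ref{cor:infconjclass} also applies are both valid.
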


In particular, in icc groups that do not satisfy a law  all verbal sets must  be infinite. Examples of icc groups are given in Section \ref{sec: icc groups}.

\subsection{Conciseness under group extensions}

In this section we collect some results on how conciseness behaves under taking group extensions. More results in this direction are contained in Proposition \ref{prop: infinite order elem in AH gp has infinite conj class} and in Section~\ref{sec: split central ext}. By a group extension $G$ of $H$ by $N$ we mean a short exact sequence 
$$1\rightarrow N\rightarrow G\rightarrow H\rightarrow 1.$$

We start by observing that conciseness of positive formulae behaves well under taking direct products. 

\begin{lemma}
    \label{lem: concise positive formula in direct product is concise}
    Let $\mathcal{C}_1,\ldots, \mathcal{C}_n$ be $n$ classes of groups, $\varphi$ a positive formula that is concise in each of these classes and let $G_i\in\mathcal{C}_i$ for every $i\in\lbrace 1,\ldots n\rbrace$. Then $\varphi$ is concise in $G:=\prod_{i=1}^n G_i$.
\end{lemma}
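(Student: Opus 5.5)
The key fact I would establish is that for a positive formula $\varphi$ and $G=\prod_{i=1}^n G_i$ we have
\[
G_\varphi=\prod_{i=1}^n (G_i)_\varphi .
\]
Conciseness then follows formally. Suppose $G_\varphi$ is finite. If $G_\varphi$ is empty, then $\varphi(G)$ is trivial and there is nothing to prove, so assume it is nonempty. Then every factor $(G_i)_\varphi$ is nonempty, and finiteness of the product forces each $(G_i)_\varphi$ to be finite. Since $\varphi$ is concise in $\mathcal{C}_i$ and $G_i\in\mathcal{C}_i$, each $\varphi(G_i)$ is finite. Finally, $\varphi(G)=\langle \prod_i (G_i)_\varphi\rangle$ is contained in $\prod_i \varphi(G_i)$, which is finite. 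In fact equality holds, because each $(G_i)_\varphi$ is nonempty and one can multiply by suitable elements, but the containment is all we need.

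To prove the product decomposition, I would not induct on the number of quantifiers; that approach fails in the presence of disjunctions. Instead I would use the following standard fact: positive formulae are preserved under surjective homomorphisms. The proof is by induction on the construction of the formula in prenex form \eqref{eq: general form of a positive formula}. Atomic equations are preserved by any homomorphism. The connectives $\wedge$ and $\vee$ are clearly preserved. $\exists$ is preserved by taking images of witnesses. $\forall$ is preserved using surjectivity: every element of the target lifts to the source. Applying this to the projections $\pi_i\colon G\to G_i$ shows that if $g\in G_\varphi$ then $\pi_i(g)\in (G_i)_\varphi$, which gives the inclusion $G_\varphi\subseteq\prod_i (G_i)_\varphi$.

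The reverse inclusion is the main obstacle, because disjunctions can be satisfied by different disjuncts in different coordinates. For this step I would prove, by induction on the formula, the stronger claim that for any positive formula $\psi(\bar z)$ and tuples $\bar g^{(i)}$ in $G_i$ with $G_i\models\psi(\bar g^{(i)})$ for all $i$, we have $G\models\psi(\bar g)$, where $\bar g$ is the coordinatewise tuple. The cases are as follows.
\begin{itemize}
\item Equations hold coordinatewise, so atomic formulae are fine.
\item The case $\wedge$ is immediate.
\item For $\exists y$, choose a witness in each coordinate and combine them.
\item For $\forall y$, given $h\in G$, apply the hypothesis to $\pi_i(h)$ in each coordinate.
\item For $\vee$ the claim fails in this form, so I would instead use that positive formulae over a product behave well for existential-positive formulae only. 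Here I would expect to need a different trick.
\end{itemize}

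In fact I recall that preservation under direct products (the Horn property) fails for disjunctions in general. So the cleanest route may be to avoid the equality $G_\varphi=\prod_i(G_i)_\varphi$ altogether, since only the first inclusion, which uses surjections alone, is really needed. I would restructure the argument to rely only on that inclusion, and obtain finiteness of each $(G_i)_\varphi$ differently. Embed $G_i$ into $G$ via $g\mapsto(1,\dots,g,\dots,1)$. If $(G_i)_\varphi$ is nonempty and some $G_j$ satisfies $\varphi(1)$, then the required product elements lie in $G_\varphi$. This is where I expect genuine difficulty, and I would first check whether the trivial element always lies in $(G_j)_\varphi$ when that set is nonempty, using the surjection $G_j\to 1$ in reverse.
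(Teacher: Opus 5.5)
\textbf{Genuine gap.} Your proposal does not prove the statement. The inclusion $G_\varphi\subseteq\prod_i(G_i)_\varphi$, via preservation of positive formulae under the surjections $\pi_i$, is correct. But you never show that finiteness of $G_\varphi$ forces finiteness of the $(G_i)_\varphi$. The patch you float also fails: a nonempty $(G_j)_\varphi$ need not contain $1$ (the formula $\theta$ below defines $\{c,c^{-1}\}$ in $H_3(\Z)$).

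Your diagnosis is nevertheless right, and this gap cannot be closed. The paper's proof asserts $G_\varphi=\prod_i(G_i)_\varphi$ for every positive $\varphi$, reading it off from the componentwise rewriting. That step tacitly replaces $\bigvee_j\bigwedge_i$ by $\bigwedge_i\bigvee_j$ and interleaves the quantifier blocks coordinate by coordinate, which is invalid once $m\ge 2$. For instance, $\forall y\,(y^2=1\vee y^3=1)$ defines all of $\Z/2\Z$ and all of $\Z/3\Z$, but the empty set in $\Z/2\Z\times\Z/3\Z$. So the dividing line is $\vee$, not $\forall$ or $\exists$: your four non-$\vee$ cases are fine, but existential positive formulae with disjunction already fail.

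\textbf{The lemma is false for positive formulae with disjunctions.} Let $H=H_3(\Z)$ be the integral Heisenberg group, $c=[a,b]$, and write $[u,H]=\{[u,t]\colon t\in H\}$. Let $\Theta(u,v)$ be
\[
\forall y_1,y_2,y\ \exists t_1,t_2,s_1,s_2\colon [y_1,y_2]=[u,t_1]\wedge[y_1,y_2]=[v,t_2]\wedge y=s_1s_2\wedge[s_1,u]=1\wedge[s_2,v]=1 .
\]
In $H$ the first two conjuncts say $\mathrm{Z}(H)=H'\subseteq[u,H]\cap[v,H]$, i.e.\ $u,v$ are primitive modulo $\mathrm{Z}(H)$. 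The last conjunct says $H=C_H(u)C_H(v)$. Together they mean $u,v$ map to a basis of $H/\mathrm{Z}(H)\cong\Z^2$, so $\theta(x):=\exists u,v\colon x=[u,v]\wedge\Theta(u,v)$ defines $\{c,c^{-1}\}$.

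Now let $\sigma$ be the positive sentence
\[
\forall z\,\bigl(\exists s,t_1,t_2\colon z=s^2[t_1,t_2]\ \vee\ \exists w,u,v,v'\colon \Theta(u,v)\wedge[z,w]=[u,v][u,v']^2\bigr).
\]
In $H$ the first disjunct holds iff the image of $z$ in $\Z^2$ lies in $2\Z^2$. The second holds iff $[z,H]$ contains an odd power of $c$, i.e.\ iff that image does not lie in $2\Z^2$. Hence $H\models\sigma$. In $H\times H$, $\sigma$ fails at $z=(a,1)$: the first disjunct fails in the first coordinate, and the second fails in the second coordinate, where it would require $1=c^{\pm1+2k}$.

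Put $\varphi:=\theta\vee\sigma$. Then $H_\varphi=H$ is infinite, so $\varphi$ is concise in the class of groups isomorphic to $H$. On the other hand, $\theta$ has no $\vee$, so by your induction $(H\times H)_\varphi=(H\times H)_\theta=\{c^{\pm1}\}\times\{c^{\pm1}\}$. This set is finite but generates an infinite subgroup, so $\varphi$ is not concise in $H\times H$.

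\textbf{What survives.} Your induction without the $\vee$ step is correct. For positive formulae built from equations using only $\wedge,\exists,\forall$ one has $G_\varphi=\prod_i(G_i)_\varphi$, and your first paragraph then yields conciseness. This class includes word formulae, which is all the paper later uses this lemma for.
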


\begin{proof}
 We show that, for any positive  formula $\varphi$, the equality $\prod_{i=1}^n{(G_i)_\varphi}=(\prod_{i=1}^nG_i)_\varphi$ always holds.
  Let $\varphi$ be any positive formula, equivalent to

   \begin{equation*}
\overline{Q}\ \overline{y}:\bigvee_{j=1}^m \left(w_{j,1}(x,\overline{y})= 1\wedge\cdots\wedge w_{j,k_j}(x,\overline{y})=1\right).
\end{equation*}

Since $G=\prod_{i=1}^n G_i$, any element $g\in G$ can be written uniquely as a product $g_1\cdots g_n\in G_1\cdots G_n$ and every element $h^{(j)}$ in a tuple $\overline{h}$ of elements of $G$ can be written uniquely as $h^{(j)}_1\cdots h^{(j)}_n\in G_1\cdots G_n$. We write for short $\overline{h}=\overline{h}_1\cdots\overline{h}_n$, where $\overline{h}_i$ denotes the tuple $(h^{(j)}_i)_{j=1}^\ell$, and $\ell$ is the length of $\overline{h}$.
If $g\in G$ is a solution of $\varphi$, the formula can be written component-wise as  
\begin{equation*}
\overline{Q}\ \overline{h}_1\cdots\overline{Q}\ \overline{h}_n:\bigvee_{j=1}^m \left(\bigwedge_{i=1}^n\left(w_{j,1}(g_i,\overline{h}_i)= 1\right)\wedge\cdots\wedge \bigwedge_{i=1}^n\left(w_{j,k_j}(g_i,\overline{h}_i)=1\right)\right), 
\end{equation*}
which is in turn equivalent to

\begin{equation*}
\overline{Q}\ \overline{h}_1\cdots\overline{Q}\ \overline{h}_n:\bigvee_{j=1}^m \left(\bigwedge_{i=1}^n\left(\left(w_{j,1}(g_i,\overline{h}_i)= 1\right)\wedge\cdots\wedge \left(w_{j,k_j}(g_i,\overline{h}_i)=1\right)\right)\right). 
\end{equation*}

It follows that $G_\varphi=\prod_{i=1}^n({G_i})_{\varphi}$.
We can conclude that, if $G_\varphi$ is finite, then each $(G_i)_\varphi$ is finite and, since $\varphi$ is concise in every $G_i$, also the logical subgroup $\varphi(G)=\prod_{i=1}^n\varphi(G_i)$ is finite.
  
\end{proof}

\begin{remark}
    Note that the argument used in the proof of the previous Lemma \ref{lem: concise positive formula in direct product is concise}, namely that $(\prod_{i=1}^n{G})_\varphi=\prod_{i=1}^n{(G_i)_\varphi}$, does not extend in general to all formulae. For example, let $p$ be a prime and consider the negative existential formula  
    $$\varphi(x):=\exists y: [x,y]\neq 1\wedge x^p\neq 1.$$
    Let $G_1$ be a torsion-free infinite abelian group and let $G_2$ be an infinite non-abelian $p$-torsion group. 
    An element $g=g_1g_2\in G_1G_2$ in $G_\varphi$ satisfies $[g_2, h_2]\neq 1$ for some $h_2\in G_2$ and $g_1\neq 1$. In particular $G_\varphi$ is infinite. However, $(G_1)_\varphi=(G_2)_\varphi=\emptyset$.
\end{remark}

Since the sets defined by word formulae behave well under taking quotients, we have the following.

\begin{lemma}
\label{lem: conciseness is preserved under finite-by-concise}
 Let $\mathcal{C}$ be a class of groups and $w$ a word that is concise in $\mathcal{C}$. Let $H$ be a finite group, $Q\in\mathcal{C}$ and let $G$ be a group extension $1\rightarrow H\rightarrow G\rightarrow Q\rightarrow 1$. Then $w$ is concise in $G$.
\end{lemma}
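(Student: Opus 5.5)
Let $\pi\colon G\to Q$ denote the quotient map with kernel $H$. The proof uses two facts: word values map onto word values under surjections, and the kernel is finite.

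First, $\pi(G_w)=Q_w$. For any $g_1,\dots,g_n\in G$ we have $\pi(w(g_1,\dots,g_n))=w(\pi(g_1),\dots,\pi(g_n))$. Since $\pi$ is surjective, every tuple in $Q^n$ lifts to a tuple in $G^n$, which gives the reverse inclusion. Consequently, if $G_w$ is finite, then $Q_w$ is finite. Because $w$ is concise in $\mathcal{C}$ and $Q\in\mathcal{C}$, the verbal subgroup $w(Q)$ is finite.

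Next, $\pi$ maps the generating set $G_w$ of $w(G)$ onto the generating set $Q_w$ of $w(Q)$, so $\pi(w(G))=w(Q)$. Therefore $w(G)$ is an extension of $w(G)\cap H$ by $w(Q)$:
\[
1\rightarrow w(G)\cap H\rightarrow w(G)\rightarrow w(Q)\rightarrow 1 .
\]
Here $w(G)\cap H$ is a subgroup of the finite group $H$, so it is finite, and $w(Q)$ is finite by the previous step. Hence
\[
|w(G)|\le |H|\cdot|w(Q)|<\infty,
\]
and $w(G)$ is finite, which proves that $w$ is concise in $G$.

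No step here is a real obstacle. The only point that needs care is the surjectivity $\pi(G_w)=Q_w$, which relies on $w$ being a word, that is, an existential positive formula with no negations or universal quantifiers. For general formulae the image of $G_\varphi$ need not equal $Q_\varphi$ (or even lie inside it), and that is why the lemma is stated only for words.
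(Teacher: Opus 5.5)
Your proof is correct and is essentially the paper's argument. The paper also notes that finiteness of $G_w$ forces finiteness of $Q_w$, applies conciseness in $Q$, and concludes from $w(G)H/H\cong w(Q)$ and $|H|<\infty$; this is your extension $1\to w(G)\cap H\to w(G)\to w(Q)\to 1$ read through the second isomorphism theorem.

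One small caveat on your closing remark: the inclusion $Q_w\subseteq\pi(G_w)$ holds because the bound variables in $\exists\overline{y}\colon x=w(\overline{y})$ are unconstrained, not because the formula is positive and existential. A general positive existential formula only guarantees $\pi(G_\varphi)\subseteq Q_\varphi$.
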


\begin{proof}
Suppose that $G_w$ is finite. Then necessarily $Q_w$ is finite and therefore $w(Q)\cong \frac{w(G)H}{H}$ is finite as well because $w$ is concise in $Q$. Since $H$ is finite, this implies that also $w(G)H$, and hence $w(G)$, are finite.    
\end{proof}

From the previous lemma it follows that, for example, every word is concise in any group extension of an icc group by a finite group. In Proposition \ref{prop: infinite order elem in AH gp has infinite conj class} we will extend this result to all formulae.

\begin{lemma}
    \label{lem: inf ord has inf con cl-by-icc is concise}
   Let $\varphi$ be any formula. Let $H$ be a group where every infinite order element has infinite conjugacy class, $Q$ an icc group and let $G$ be an extension $1\rightarrow H\rightarrow G\rightarrow Q\rightarrow 1$. Then $\varphi$ is concise in $G$.
\end{lemma}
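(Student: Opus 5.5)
The plan is to show that every infinite order element of $G$ has infinite conjugacy class, and then conclude by Corollary \ref{cor:infconjclass}. (Alternatively one can argue directly with $G_\varphi$, but the conjugacy-class statement is cleaner.) Identify $H$ with a normal subgroup of $G$ and write $\pi\colon G\to Q$ for the quotient map. Let $g\in G$ have infinite order. We split into two cases according to whether $g$ lies in $H$.

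\emph{Case 1: $g\notin H$.} Then $\pi(g)\neq 1$ in $Q$. Since $Q$ is icc, the conjugacy class of $\pi(g)$ in $Q$ is infinite. Because $\pi$ is surjective, $\pi$ maps the $G$-conjugacy class $[g]$ onto the $Q$-conjugacy class of $\pi(g)$, since $\pi(xgx^{-1})=\pi(x)\pi(g)\pi(x)^{-1}$ and every element of $Q$ is some $\pi(x)$. An infinite set cannot be the image of a finite set, so $[g]$ is infinite. Note that this case does not even use that $g$ has infinite order.

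\emph{Case 2: $g\in H$.} By hypothesis on $H$, the $H$-conjugacy class $\{hgh^{-1}\mid h\in H\}$ is infinite. This class is contained in the $G$-conjugacy class $[g]$, so $[g]$ is infinite as well.

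Hence every infinite order element of $G$ has infinite conjugacy class, and Corollary \ref{cor:infconjclass} gives that every formula $\varphi$ is concise in $G$. No step is a real obstacle. The only point to check is that conjugacy classes in $G$ surject onto conjugacy classes in $Q$, which is immediate from surjectivity of $\pi$; the $H$-part is a plain containment of conjugacy classes.
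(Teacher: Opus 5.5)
Your proof is correct and takes the same approach as the paper. Both split on whether $g\in H$, using that $Q$ is icc when $g\notin H$ and the hypothesis on $H$ when $g\in H$; the only difference is that you pass through Corollary~\ref{cor:infconjclass}, while the paper works directly with an infinite order element of $G_\varphi$ and leaves the Schur reduction step implicit.
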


\begin{proof}
    Let $g\in G_\varphi$ be of infinite order and consider its class $\overline{g}$ in $Q\cong G/H$. If $g$ does not belong to $H$ then the conjugacy class of $\overline{g}$ is infinite and hence the conjugacy class of $g$ is infinite. Otherwise $g\in H\cap G_\varphi$. Therefore $g$ has infinite conjugacy class and, since $G_\varphi$ is normal, this set must be infinite.    
\end{proof}

We will often use the following straightforward remark.

\begin{remark}
    Let $\varphi$ be an existential formula that takes infinitely many values in a group $H$ and let $G$ be a group containing $H$. Then $G_\varphi$ is infinite.
\end{remark}

\section{Formulae in acylindrically hyperbolic groups}\label{sec:AH}

The class of acylindrically hyperbolic groups is extensive (see \cite[Section 8]{OsinAH}): it includes all non-elementary relatively hyperbolic groups, non-(virtually
cyclic) groups acting properly on proper CAT(0)-spaces with at least one rank 1-element (which include (non-affine) irreducible Coxeter groups and many classes of Artin groups), mapping class groups of compact surfaces of genus at least $1$, outer automorphism groups
of free groups of rank $\geq 2$, many groups acting on simplicial trees (\cite{MO}), graph products that do not split as direct products, etc.

An action denoted by $\circ$ of a group $G$ on a metric space $(\cS,\d)$ is called {\it acylindrical} if for every $\e> 0$ there exist $R\geq 0$ and $N \geq 0$ such that for every two points $x,y\in \cS$ with $\d(x,y)\geq R$ there are at most $N$ elements of $G$ satisfying
$\d(x, g\circ x)\leq \e$ and  $\d(y, g\circ y) \leq \e .$ 
A group $G$ is called {\it acylindrically hyperbolic} (term introduced in \cite{OsinAH}) if it admits a non-elementary acylindrical action on a hyperbolic space (in this situation non-elementary is equivalent to $G$ being non-virtually cyclic and the action having unbounded orbits).

In acylindrically hyperbolic groups conjugacy classes of elements of infinite order are infinite, hence we can apply Schur's result (Lemma \ref{lem:schur_reduction}).
To show that conjugacy classes are infinite we use the following result. Recall from Section \ref{subsect: prelim: schur's lemma and icc} that a group is called icc if every (non-trivial) conjugacy class is infinite.

\begin{theorem}\label{main:AH}
Let $G$ be an acylindrically hyperbolic group and $K(G)$ the \emph{finite radical} of $G$, that is, the maximal finite normal subgroup of $G$. 
\begin{itemize}
    \item[1.]\cite[Theorem 2.35]{DGO}    The group $G$ is icc if and only if $K(G)=\{1\}.$

\item[2.] \cite[Lemma 5.10]{H}  The group $G/K(G)$ is acylindrically hyperbolic and has trivial finite radical. 
\end{itemize}
\end{theorem}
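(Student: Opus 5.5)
For part~1, one direction is immediate and the other reduces to showing that the FC-centre of $G$ is finite. If $K(G)\neq\{1\}$, any non-trivial $k\in K(G)$ has its conjugacy class contained in the finite normal subgroup $K(G)$, so $G$ is not icc.

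For the converse, assume $K(G)=\{1\}$. Let $FC(G)$ be the set of elements with finite conjugacy class; this is a normal subgroup of $G$. I will show that $FC(G)$ is finite. It is then a finite normal subgroup, hence contained in $K(G)=\{1\}$, which gives icc.

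Suppose instead that $FC(G)$ is infinite. I will use Osin's result that an infinite normal subgroup $N$ of an acylindrically hyperbolic group contains a loxodromic element $h$ for the given acylindrical action on $\cS$. This is because the action of $N$ on $\cS$ is again acylindrical, and it cannot have bounded orbits: otherwise $N$ would fix a bounded set of quasi-centres, which by acylindricity and normality forces $N$ to be finite. Applying this to $N=FC(G)$ gives a loxodromic $h\in FC(G)$. Since $[h]$ is finite, the centraliser $C_G(h)$ has finite index in $G$. On the other hand, for a loxodromic element in an acylindrical action, $C_G(h)$ lies in the elementary subgroup $E(h)$, the stabiliser of $\{h^{\pm\infty}\}$. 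By Dahmani--Guirardel--Osin, $E(h)$ is virtually cyclic. Hence $G$ would be virtually cyclic, contradicting non-elementarity. The main obstacle is the input that infinite normal subgroups contain loxodromics, together with virtual cyclicity of $E(h)$. I would quote both from \cite{DGO} and \cite{OsinAH} rather than reprove them.

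For part~2, set $K=K(G)$ and $\bar G=G/K$. By Osin, we may take the acylindrical non-elementary action to be the action on a hyperbolic Cayley graph $\Gamma(G,X)$ for some generating set $X$.

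\emph{Acylindricity for $\bar G$.} Let $\bar X$ be the image of $X$ in $\bar G$. The quotient map $\Gamma(G,X)\to\Gamma(\bar G,\bar X)$ is $1$-Lipschitz. Its fibres are $K$-cosets, which have diameter bounded by $D:=\max_{k\in K}|k|_X$, so distances drop by at most $D$. Hence the map is a $G$-equivariant quasi-isometry, and $\Gamma(\bar G,\bar X)$ is hyperbolic. To get acylindricity, lift: if $\bar g$ moves two far-apart points $\bar x,\bar y$ by at most $\e$, then some lift $g$ moves chosen lifts $x,y$ by at most $\e+2D$. Thus the number of such $\bar g$ is bounded by the number of such $g$, which acylindricity of the $G$-action bounds for $R$ large.

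\emph{Non-elementarity for $\bar G$.} The action has unbounded orbits because the quotient map is a quasi-isometry. Also $\bar G$ is not virtually cyclic, since $K$ is finite and $G$ is not virtually cyclic.

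\emph{Trivial finite radical.} If $\bar L$ is a finite normal subgroup of $\bar G$, its preimage $L$ in $G$ is an extension of $K$ by $\bar L$. So $L$ is finite and normal, hence $L\le K$ and $\bar L=1$. Thus $K(\bar G)=\{1\}$.
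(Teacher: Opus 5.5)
Your proof is correct, but it follows a different route from the paper, which gives no argument at all: both parts are quoted from \cite[Theorem 2.35]{DGO} and \cite[Lemma 5.10]{H}. You replace these citations with more basic inputs:
\begin{itemize}
\item Osin's Cayley-graph model of acylindrical actions;
\item the fact that an infinite normal subgroup contains a loxodromic element;
\item virtual cyclicity of $E(h)\supseteq C_G(h)$.
\end{itemize}
Your quasi-centre step can be made concrete. If $D=\mathrm{diam}(Ns)$ and $g$ is loxodromic, then every $n\in N$ moves both $s$ and $g^ks$ by at most $D$, because $g^{-k}ng^k\in N$. Acylindricity then bounds $|N|$.

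Your approach buys something useful for this paper. You show that the FC-centre $\mathrm{FC}(G)$ is finite for every acylindrically hyperbolic group, so $\mathrm{FC}(G)=K(G)$. This proves the existence of $K(G)$ for free. More to the point, it gives Proposition \ref{prop: infinite order elem in AH gp has infinite conj class} immediately: an element with finite conjugacy class lies in the finite group $\mathrm{FC}(G)$, so it has finite order. Part 2 and the passage to $G/K(G)$ are then unnecessary for the paper's application.

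Your part 2 is a clean quasi-isometry argument. In fact every lift of $\bar g$ works with $\e+D$, not just some lift with $\e+2D$. The citations are, of course, shorter and put the statement in the generality of the original sources.
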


For example, $F \times \Z/2\Z$ is hyperbolic but not icc.

Now it follows from Lemma \ref{lem: conciseness is preserved under finite-by-concise} that every word is concise in the class of acylindrically hyperbolic groups. However even more is true.

\begin{proposition}
\label{prop: infinite order elem in AH gp has infinite conj class}    
   Let $g$ be an infinite order element in an acylindrically hyperbolic group $G$. Then the conjugacy class $[g]$ of $G$ is infinite. 
\end{proposition}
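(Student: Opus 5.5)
The plan is to pass to the quotient by the finite radical and use Theorem \ref{main:AH}. Let $K=K(G)$ be the finite radical of $G$ and let $\pi\colon G\to \overline{G}:=G/K$ be the projection. By part 2 of Theorem \ref{main:AH}, $\overline{G}$ is acylindrically hyperbolic with trivial finite radical. Part 1 of the same theorem then shows that $\overline{G}$ is icc. This is exactly the extension situation of Lemma \ref{lem: conciseness is preserved under finite-by-concise}, with finite kernel and icc quotient, but now applied to conjugacy classes rather than to verbal sets.

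First I check that $\pi(g)\neq 1$. If $\pi(g)=1$, then $g\in K$. Since $K$ is finite, $g$ would have finite order, contradicting the hypothesis. So $\pi(g)$ is a non-trivial element of the icc group $\overline{G}$, and its conjugacy class $[\pi(g)]$ in $\overline{G}$ is infinite.

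Next I lift this back to $G$. Since $\pi$ is surjective, $\pi([g])=[\pi(g)]$: every conjugate $\pi(h)\pi(g)\pi(h)^{-1}$ is the image of $hgh^{-1}$. A set whose image under a map is infinite must itself be infinite. Hence $[g]$ is infinite.

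The only substantive input is the cited Theorem \ref{main:AH}, and I expect no real obstacle beyond invoking it correctly. The one point to be careful about is that the infinite order hypothesis is used only to ensure $g\notin K$. For torsion elements the statement genuinely fails: for example, in $F\times\Z/2\Z$ the generator of the $\Z/2\Z$ factor is central.
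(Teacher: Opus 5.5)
Your proposal is correct and follows the paper's proof essentially verbatim: project to $G/K(G)$, use Theorem \ref{main:AH} to see that this quotient is icc, note that $g\notin K(G)$ because $g$ has infinite order, and lift the infinite conjugacy class of $\pi(g)$ back to $G$ via $\pi([g])=[\pi(g)]$.
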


\begin{proof}
Let $\ \bar{}:G \mapsto G/K(G)$ be the projection map.

Let $g \in G$ have infinite order. Then clearly $g \notin K(G)$ and $\bar{g}$ is not trivial in $G/K(G)$. Since by Theorem \ref{main:AH} (2) the group $G/K(G)$ is acylindrically hyperbolic and has trivial finite radical, we get by part (1) of the same theorem that $G/K(G)$ is icc, so in particular $[\bar{g}]$ is infinite.

As conjugacy classes map to conjugacy classes under the projection $\ \bar{ }$, the conjugacy class of $g$ in $G$ must also be infinite.
\end{proof}

By Corollary \ref{cor:infconjclass} we therefore get: 

\AHgroups*

The same argument applies to all groups that are extensions of a finite group by a non-trivial icc group, in particular to groups that have a finite hyper FC-centre. These cannot be virtually nilpotent groups (see~\cite{FF}).

It is clear that all formulae are also concise in any direct product of acylindrically hyperbolic groups.
Moreover, combining Proposition \ref{prop: infinite order elem in AH gp has infinite conj class} with Lemma \ref{lem: inf ord has inf con cl-by-icc is concise} we can conclude that every formula is concise in any extension $1\rightarrow H\rightarrow G\rightarrow Q\rightarrow 1$, where $H$ is an acylindrically hyperbolic group (possibly with torsion) and $Q$ is a torsion-free acylindrically hyperbolic group.

\begin{remark}
    \label{rmk: verbal sets in AH groups are infinite}
    \normalfont
    In non-elementary hyperbolic groups, and even more generally in acylindrically hyperbolic groups, verbal subgroups are always infinite. 
    
    This follows from the fact that the verbal width of any word is infinite (see \cite{MN} for non-elementary hyperbolic groups and \cite{BBF} for acylindrically hyperbolic groups). 
This implies that verbal sets of proper words in acylindrically hyperbolic groups are infinite. Indeed, suppose that $G$ is an acylindrically hyperbolic group and $w$ is a proper word such that $G_w$ is finite. Then, by Proposition \ref{prop: infinite order elem in AH gp has infinite conj class} every element of $G_w$ must have finite order. It follows by Schur's Lemma \ref{lem:schur_reduction} that the verbal subgroup generated by $G_w$ is finite, which would contradict the fact that $w$ has infinite width. 

By using results of Ol'shanski\u i on quasi-geodesics in hyperbolic groups (\cite{O}) and the fact that any acylindrically hyperbolic group $G$ contains a non-elementary hyperbolic group $H$ that is hyperbolically embedded, and therefore quasi-isometrically embedded in $G$ (see for example \cite[Lemma 6.6]{AC2017}, \cite[Lemma 3.1]{AMS} and \cite[Theorem 6.14]{DGO}), one could give a geometric proof of this fact, and also show that every verbal set of a non-trivial word in any acylindrically hyperbolic group contains elements of arbitrarily large word length. Here, by word length we mean the following. Given an element $g$ in a group $G$ with generating set $\mathcal{A}$, the word length of $g$ in $G$ with respect to $\mathcal{A}$ is the distance of $g$ from the identity element $e$ in the Cayley graph~$\mathcal{C}(G,\mathcal{A}).$
\end{remark}

As a corollary of this remark we obtain the following.

\begin{corollary}\label{cor:extAH}
    Every word is concise in any group containing an acylindrically hyperbolic group as a subgroup.
\end{corollary}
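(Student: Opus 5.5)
Let $G$ be a group containing an acylindrically hyperbolic subgroup $H$, and let $w$ be a word with $G_w$ finite. I will show that $w(G)$ is finite by splitting into two cases, according to whether $w$ is a proper word.

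\textbf{Case 1: $w$ is proper.} The verbal set $H_w$ is a subset of $G_w$, since $w$-values computed with entries from $H$ are in particular $w$-values in $G$. So $H_w$ is finite. But Remark~\ref{rmk: verbal sets in AH groups are infinite} says that verbal sets of proper words in acylindrically hyperbolic groups are infinite. This is a contradiction, so $G_w$ cannot be finite when $w$ is proper, and conciseness holds vacuously.

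\textbf{Case 2: $w$ is not proper.} Then, for every group $K$, either $K_w=\{1\}$ for all $K$, or $K_w=K$ for all $K$. Otherwise there would be groups $K_1$ with $(K_1)_w\neq 1$ and $K_2$ with $(K_2)_w\neq K_2$, which is exactly the definition of $w$ being proper.
\begin{itemize}
\item If $w$ is a law in every group, then $G_w=\{1\}$ and $w(G)=1$ is finite.
\item If $G_w=G$ for every group, then $H\subseteq G_w$. Acylindrically hyperbolic groups are infinite, since they are not virtually cyclic, so $G_w$ is infinite and conciseness is again vacuous.
\end{itemize}
A short check on the dichotomy: in the free group $F_1$, either $w(x)$ evaluates trivially everywhere or it does not. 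This can be related to the exponent sum, but the definition of properness already gives the dichotomy directly, so no further computation is needed.

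The main obstacle is the input from Remark~\ref{rmk: verbal sets in AH groups are infinite}, which rests on the infinite verbal width of words in acylindrically hyperbolic groups. That remark also tacitly uses that $w(H)$ is not finite in the relevant cases. Since the paper states this remark earlier, I take it as given here. The rest of the argument is just the monotonicity $H_w\subseteq G_w$ for subgroups $H\le G$, which holds for word formulae because they are existential.
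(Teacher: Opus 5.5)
Your proposal is correct and follows the paper's (implicit) argument: the monotonicity $H_w\subseteq G_w$ combined with the infinitude of verbal sets of proper words in acylindrically hyperbolic groups from Remark~\ref{rmk: verbal sets in AH groups are infinite}. Your explicit treatment of non-proper words fills in a case the paper leaves unstated. These are the trivial word, and words with $K_w=K$ for every group $K$.
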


\section{Direct products of acylindrically hyperbolic groups with abelian groups}
\label{sec: split central ext}

In this section we consider direct products of the form $A\times H$, where $A$ is torsion-free abelian and $H$ is torsion-free acylindrically hyperbolic. We prove that all words, and, more generally, all existential formulae, are concise in this class.
A direct product as above is neither icc nor acylindrically hyperbolic as it has an infinite center isomorphic to $A$. In Section \ref{sec: dihedral artin groups} we will use the results of this section to prove that existential formulae are concise in torus knot groups. 

First, we observe that by Lemma \ref{lem: concise positive formula in direct product is concise} every positive formula $\varphi$ is concise in any direct product $A\times K$, where $A\leq \mathrm{Z}(G)$ and $K$ is a group where $\varphi$ is concise.
Since every word $w$ can be represented by a positive existential formula, we immediately obtain:

\begin{proposition}
    Every word is concise in any direct product $A\times K$, where $A$ is abelian and $K$ is a group where every word is concise.
\end{proposition}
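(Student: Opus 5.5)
The statement to prove is: every word $w$ is concise in $A\times K$, where $A$ is abelian and every word is concise in $K$. The plan is to apply Lemma~\ref{lem: concise positive formula in direct product is concise} to the word formula associated to $w$, $\hat w(x):=\exists y_1,\ldots,y_n\colon x\,w(y_1,\ldots,y_n)^{-1}=1$. This formula is positive and existential, so the equality $(A\times K)_{\hat w}=A_{\hat w}\times K_{\hat w}$ established in that lemma's proof applies. To use the lemma we need $\hat w$ to be concise in the class containing $A$ and in the class containing $K$. For $K$ this is the hypothesis, since conciseness of $w$ is by definition conciseness of $\hat w$.

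The key step is conciseness of $w$ in the abelian group $A$. I would cite \cite{CP}, where every formula is shown to be concise in abelian groups. A direct argument is also short. Since $A$ is abelian, $w(a_1,\ldots,a_n)=a_1^{e_1}\cdots a_n^{e_n}$, where $e_i$ is the exponent sum of $y_i$ in $w$. Put $e=\gcd(e_1,\ldots,e_n)$. Choosing the $a_i$ freely gives
\[
A_w=\{a_1^{e_1}\cdots a_n^{e_n}\}=A^e=\{a^e\mid a\in A\},
\]
because $A^{e_i}\subseteq A^e$ and, by B\'ezout, $a^e$ is a product of powers $a^{c_ie_i}$. Hence $A_w$ is already a subgroup, so $w(A)=A_w$. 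In particular, if $A_w$ is finite then $w(A)$ is finite, and $w$ is concise in $A$.

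With both factors handled, Lemma~\ref{lem: concise positive formula in direct product is concise} (with $n=2$, $\mathcal C_1=\{A\}$, $\mathcal C_2=\{K\}$) gives the result. Concretely: if $(A\times K)_w$ is finite, then both $A_w$ and $K_w$ are finite. Therefore $w(A)$ and $w(K)$ are finite, and so $w(A\times K)=w(A)\times w(K)$ is finite.

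I do not expect a genuine obstacle. The only point needing care is the abelian case, namely checking that the verbal set is all of $A^e$ via the B\'ezout argument, and that is routine.
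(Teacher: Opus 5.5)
Your proof is correct and follows the paper's route. The paper obtains this proposition immediately from Lemma~\ref{lem: concise positive formula in direct product is concise} applied to the positive existential word formula, relying implicitly on \cite{CP} for conciseness in abelian groups. Your B\'ezout argument that $A_w=A^e$ is already a subgroup is a valid self-contained replacement for that citation, and for a single word the decomposition $(A\times K)_w=A_w\times K_w$ holds because $w$ is evaluated coordinatewise.
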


We show that all existential formulae are concise in such direct products following a similar approach to the one used in \cite{CP} to prove the conciseness of all formulae in abelian groups and of existential formulae in finitely generated torsion-free nilpotent groups of class $2$. Namely, we show that, if an element of infinite order satisfies a given existential formula, then infinitely many of its powers must satisfy the same formula.
As in the nilpotent case, we will make use of the fact that we can restrict to solutions of the formula lying in the center of the group (see \cite[proof of Proposition 3.10]{CP}).

\begin{proposition}
\label{prop: existential formulae are concise in groups embedding nicely in AxK}
 Let $G$ be a group such that every $g\in G\setminus \mathrm{Z}(G)$ of infinite order has infinite conjugacy class. Let $A$ be a torsion-free abelian group and assume that $G$ embeds into a direct product $A\times K$ as a subgroup of finite index. Moreover, suppose that $\mathrm{Z}(G)$ embeds into $A$.

Let $\varphi$ be an existential formula with free variable $x$, so $\varphi$ is equivalent to $$\exists\overline{y}:\bigvee_{j=1}^m \left(w_{j,1}(x,\overline{y})\#_{j,1} 1\wedge\cdots\wedge w_{j,k_j}(x,\overline{y})\#_{j,k_j}1\right),$$
     where every $w_{j,i}$ is a word, $m$ is a positive integer, and $\#_{j,i}\in \lbrace =, \neq\rbrace$. 
     
     Then $\varphi$ is concise in $G$.
    
\end{proposition}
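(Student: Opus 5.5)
Suppose $G_\varphi$ is finite; we show that $\varphi(G)$ is finite. By Lemma \ref{lem:schur_reduction} it suffices to show that every element of $G_\varphi$ has finite order. So assume for contradiction that $g\in G_\varphi$ has infinite order. Since $G_\varphi$ is closed under conjugation and finite, $g$ has finite conjugacy class. By hypothesis, every infinite order element outside $\mathrm{Z}(G)$ has infinite conjugacy class, so $g\in \mathrm{Z}(G)$. This is the reduction to central solutions, as in \cite[proof of Proposition 3.10]{CP}. The goal is then to show that infinitely many distinct powers $g^{n}$ also lie in $G_\varphi$. Since $g$ has infinite order, this contradicts finiteness.

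Fix a witness tuple $\overline{h}$ in $G$ and an index $j$ such that all the (in)equalities $w_{j,i}(g,\overline{h})\#_{j,i}1$ hold. Identify $G$ with its finite-index image in $A\times K$, and let $d=[A\times K:G]$. For each element write $u=(u_A,u_K)$. Since $\mathrm{Z}(G)$ embeds into $A$, I would arrange, or else deduce from the hypotheses, that central elements of $G$ have trivial $K$-component; that is, $g=(a,1)$ with $a\in A$ of infinite order, and $A$ is torsion-free.

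Now take exponents $n\equiv 1 \pmod{N}$ for a suitable $N$, chosen as a multiple of $d!$ so that $N$-th powers land in $G$. Try the new witness tuple $\overline{h}^{(n)}$ defined by $h^{(n)}_{A}=h_A^{\,n}$ and $h^{(n)}_K=h_K$. Evaluate a word $w(x,\overline{y})$ at $(g^n,\overline{h}^{(n)})$. Its $A$-component is $e\cdot n$ times the additive value of the $A$-component of $w(g,\overline{h})$, where $e$ is linear in exponent sums. Its $K$-component equals $w(1,\overline{h}_K)$, which is exactly the $K$-component of $w(g,\overline{h})$. Hence the $A$-component of $w(g^n,\overline{h}^{(n)})$ is the $n$-th multiple of the $A$-component of $w(g,\overline{h})$, and the $K$-components agree. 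Because $A$ is torsion-free, an $A$-component is trivial exactly when its $n$-th multiple is trivial, for $n\neq 0$. So every equality and every inequality is preserved. This gives $\varphi(g^n)$ in $G$, provided the new tuple lies in $G$.

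The main obstacle is guaranteeing that $\overline{h}^{(n)}$ lies in $G$, together with the embedding subtleties for $g$. I would write $h^{(n)}=h\cdot(h_A^{\,n-1},1)$. So it suffices that $(h_A^{\,n-1},1)\in G$ whenever $N\mid n-1$. This holds if $(A\times 1)\cap G$ has finite index in $A\times 1$ with exponent dividing $N$. That follows because $G\cap(A\times 1)$ has index at most $d$ in $A\times 1$, so $d!$-th powers of $A\times 1$ lie in $G$. Similarly, if $g$ has a nontrivial $K$-component, I would replace $g$ by a suitable power in $\mathrm{Z}(G)$ and use the embedding $\mathrm{Z}(G)\hookrightarrow A$ to control it. This is the step I would check most carefully. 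Taking the infinitely many $n\equiv1\pmod N$ then produces infinitely many distinct elements $g^n\in G_\varphi$, which is the desired contradiction.
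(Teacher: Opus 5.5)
Your proposal is correct and follows essentially the same route as the paper. You reduce to a central $g$, which has trivial $K$-component because $\mathrm{Z}(G)\subseteq A\times\{1\}$ under the embedding (the paper reads the hypothesis this way too, so your fallback about a nontrivial $K$-component is unnecessary). You then replace the witnesses $\overline{z}\,\overline{v}$ by $\overline{z}^{\,n}\overline{v}$ and $g$ by $g^n$, and use torsion-freeness of $A$ to preserve the inequalities. Your uniform componentwise treatment and your choice $n\equiv 1 \pmod{d!}$ are slightly cleaner than the paper's version: the paper only asks for $N\geq M$, which really needs to be a divisibility condition to guarantee that $\overline{z}^{\,N}$ lies in $G$.
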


\begin{proof}
    Let $g\in G_\varphi$ be of infinite order. If $g\notin \mathrm{Z}(G)$, then its conjugacy class is infinite and there is nothing to prove. So suppose that $g\in \mathrm{Z}(G)$. 
    Since $g$ satisfies $\varphi$, we can assume that it satisfies one of the conjunctive terms occurring in the formula, i.e., $$w_{j,1}(g,\overline{h})\#_{j,1} 1\wedge\cdots\wedge w_{j,k_j}(g,\overline{h})\#_{j,k_j}1$$ for some index $j$ and a certain tuple $\overline{h}$ of elements of $G$. To simplify notation we will omit the index $j$. 
    Since $g$ is central, we can rewrite the previous conjunction as 
    $$g^{m_1}\#_{1}\tilde{w}_1(\overline{h})\wedge\cdots\wedge g^{m_k}\#_{k}\tilde{w}_k(\overline{h})$$ 
    for some integers $m_i$ (namely, $-m_i$ is the sum of the exponents of $g$ in $w_{i}$, for $i\in\lbrace 1,\ldots, k\rbrace$) and some words $\tilde{w}_i$.
    By abuse of notation we write the words $\tilde{w}_i$ again as $w_i$. 
    If all the exponents $m_i$ are equal to $0$, the given formula becomes a sentence true in $G$ and therefore every element of the centre is in $G_\varphi$, which is therefore infinite. 

    Assume then that at least one exponent $m_i$ is different from $0$.
    Since $G$ embeds into $A\times K$, each $h_l$ (the $l$th element in the tuple $\overline{h}$) can be written uniquely as a product $z_lv_l\in AK$. Therefore, for each $i$ we can write $$w_i(\overline{h})=w_i(\overline{zv})=w_i(\overline{z})w_i(\overline{v})\in AK,$$ where $\overline{zv}$ denotes the tuple $(z_lv_l)_{l}$. 
    
    Rearrange the terms of the conjunction in a way that, in the first $p$ terms, the symbol $\#_{i}$ is an equality and, in the remaining $k-p$ terms, it is an inequality $\neq$, where $p\in\lbrace 0,\ldots, k\rbrace$.
    Then $g^{m_i}=w_i(\overline{h})$ for each $i\in\lbrace 1,\ldots, p\rbrace$.
    It follows from the unique representation of $g^{m_i}$ that, for each of these indices $i$, $w_i(\overline{v})=1$ and $g^{m_i}=w_i(\overline{z})$. For any power $g^N$ of $g$ we have: $(g^N)^{m_i}=w_i(\overline{z})^N=w_i({\overline{z}}^N)=w_i({\overline{z}^N})w_i({\overline{v}})=~w_i({\overline{z}}^N{\overline{v}})$. 

    Now consider the indices $i\in\lbrace p+1,\ldots, k\rbrace$. For each of these $i$ we have $g^{m_i}\neq w_i(\overline{h})$. Write as before $\overline{h}=\overline{z} \overline{v}$ 
    and suppose that for some integer $N$ we have $(g^N)^{m_i}=w_i({\overline{z}}^N)w_i({\overline{v}})=w_i({\overline{z}}^N{\overline{v}})$. This implies that $w_i({\overline{v}})=~1$. 
    Therefore the initial inequality becomes $g^{m_i}\neq w_i(\overline{z})$. However, from $(g^N)^{m_i}=(g^{m_i})^N=w_i(\overline{z})^N$ follows that $g^{m_i}=w_i(\overline{z})$, a contradiction. Hence we can conclude that $(g^N)^{m_i}\neq w_i({\overline{z}}^N\overline{v})$ for every~$N$.

    We know that $\overline{h}=\overline{z}\overline{v}$ is a tuple of elements in $G$. Now, for every $N\geq~M:=[(A\times K):G]$, every $\overline{z}^N$ in $A$ is a tuple of elements of $G$ and so $\overline{z}^{N+1}\overline{v}=\overline{z}^N(\overline{z}\overline{v})$ is also a tuple of elements of $G$.    

    In conclusion, for every $N$ big enough (namely, $N\geq M+1$) the element $g^N$ satisfies the following formula in $G$:

    $$\bigwedge_{i=1}^p{\left((g^N)^{m_i}= w_i({\overline{z}}^N\overline{v})\right)} \wedge \bigwedge_{i=p+1}^k {\left((g^N)^{m_i}\neq w_i({\overline{z}}^N\overline{v})\right)}$$
and therefore satisfies $\varphi$. It follows that, if $G_\varphi$ contains an element of infinite order, then it is infinite, and therefore $\varphi$ is concise.
    \end{proof}

 \begin{corollary}
    \label{cor: existential is concise in split central extension}
    Let $G$ be equal to a direct product $\mathrm{Z}(G)\times K$, where $\mathrm{Z}(G)$ is torsion-free and every $g\in G\setminus \mathrm{Z}(G)$ of infinite order has infinite conjugacy class.
    Then every existential formula is concise in $G$.
\end{corollary}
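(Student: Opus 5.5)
The plan is to deduce the corollary directly from Proposition~\ref{prop: existential formulae are concise in groups embedding nicely in AxK}, with the trivial embedding. Set $A:=\mathrm{Z}(G)$, which is torsion-free abelian by hypothesis. The group $G=\mathrm{Z}(G)\times K$ then embeds into $A\times K$ via the identity map. This is an embedding as a subgroup of index $1$, hence of finite index. Moreover $\mathrm{Z}(G)$ embeds into $A$, since the two coincide. The remaining hypothesis of the proposition, that every $g\in G\setminus\mathrm{Z}(G)$ of infinite order has infinite conjugacy class, is assumed verbatim. So all hypotheses are satisfied and every existential formula is concise in $G$.

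The only point I will double-check is that the proof of the proposition uses the decomposition $h=zv\in AK$ in a way compatible with the identification $A=\mathrm{Z}(G)$. In the proof, the central solution $g$ has its powers $g^{m_i}$ lying in $\mathrm{Z}(G)\le A$. The argument needs their $K$-component to be trivial, and this is exactly where the assumption that $\mathrm{Z}(G)$ embeds into $A$ is used. In our situation this is automatic: elements of $\mathrm{Z}(G)$ are literally the elements of the first factor.

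The finite-index shift by $M$ is also harmless here. Since $M=1$, every tuple $\overline{z}^N\overline{v}$ already lies in $G$, and there is no difficulty with tuples leaving $G$.

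No real obstacle is expected. The corollary is the special case of the proposition with index $1$ and $A=\mathrm{Z}(G)$.
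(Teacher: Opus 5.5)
Your proposal is correct and matches how the paper obtains this corollary: it is the special case of Proposition~\ref{prop: existential formulae are concise in groups embedding nicely in AxK} with $A=\mathrm{Z}(G)$ and the identity embedding of index~$1$, where central elements have trivial $K$-component. Your observation that $M=1$ puts every tuple $\overline{z}^N\overline{v}$ automatically in $G$ is also apt, since the power-shifting step of the proposition's proof then needs no further justification.
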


\begin{remark}
    Recall that the set of elements of a group $G$ that have finite conjugacy class forms a normal subgroup denoted by $\mathrm{FC}(G)$ and called the FC-\nobreak centre of $G$. Note that in the groups considered in Corollary \ref{cor: existential is concise in split central extension}, the fact that every $g\in G\setminus \mathrm{Z}(G)$ of infinite order has infinite conjugacy class is equivalent to the condition $\mathrm{FC}(G)=\mathrm{Z}(G)$. Indeed, we already know that infinite order elements have infinite conjugacy class. Finite order elements in $G$ that are not in the centre have the form $(1,k)$, where $k$ is any non-trivial element of finite order in $K$. But for any element $k$ of $K$, and for any non-trivial $a\in A$, the conjugacy class of the infinite order element $(a,k)$ must be infinite, so the conjugacy class of $k$ is also infinite.
\end{remark}

    \existentialdirectproduct*    

    \begin{proof}
    The centre of $G$ is isomorphic to $A$ and $K$ is an icc group. Hence the result follows from the previous Corollary \ref{cor: existential is concise in split central extension}.
\end{proof}

\section{Trivial formulae in acylindrically hyperbolic groups}
\label{sec: trivial formulae in AH groups}

In Remark \ref{rmk: verbal sets in AH groups are infinite} we observed that verbal sets of acylindrically hyperbolic groups are infinite since, in this class, proper words have infinite width.
However, we cannot conclude that the set $G_{\varphi}$ is infinite for any formula $\varphi$, as the arguments involving width do not directly apply to definable sets.
Indeed, it is possible to find non-trivial finite definable sets. For instance, for any positive integers $n,m\geq 1$ and free group $F_n$, in the hyperbolic group $F_n\times \Z/m\Z$ the center and the set of elements of order $m$, which are both definable sets, are finite and non-trivial. 

In this section we consider instances of formulae for which we get trivial definable sets, and say that a formula $\varphi$ is \emph{trivial} in  a class of groups $\mathcal{C}$ if $G_\varphi=\lbrace 1\rbrace$ for every $G\in\mathcal{C}$.\footnote{The notion of trivial formulae introduced here should not be confused with the concept of trivial positive first-order theory, where a group is said to have trivial positive first-order theory if it has the same positive theory as finitely generated non-abelian free groups.}

\begin{example}
    Consider the formula $\varphi$ given by $\forall x: xy=yx$ which defines the center $\mathrm{Z}(G)$ in any group $G$. If $G$ is icc then $\mathrm{Z}(G)=\{1\}$, hence $\varphi$ is trivial in the class of icc groups.
\end{example}

\begin{remark}
Let $\mathcal{F}$ be the class of finitely generated non-abelian free groups.
By the work of Sela and Kharlampovich-Myasnikov, all finitely generated non-abelian free groups are elementarily equivalent, so a formula $\varphi$ is non-trivial in some $F_n$, $n \geq 2$, if and only if it is non-trivial in $\mathcal{F}$, as every group in this class satisfies the sentence $\exists x: x\neq 1\wedge \varphi(x)$. Similarly, a formula defines the empty set in some $F_n$ if and only if it defines the empty set in all groups belonging to $\mathcal{F}$.
\end{remark}

It is therefore interesting to understand when a definable set of an acylindrically hyperbolic group can be finite or trivial. Since acylindrically hyperbolic groups contain free subgroups (see for example \cite[Theorem 1.2]{OsinAH} and \cite[Theorem 6.14]{DGO}), one possible approach is to compare when a given formula defines a finite or empty set in an acylindrically hyperbolic group versus when it does so in groups belonging to $\mathcal{F}$. 

First, note that a set defined by a positive formula is non-empty in an acylindrically hyperbolic group if and only if it is non-empty in the class $\mathcal{F}$.
Indeed, consider the positive sentence $\exists x:\varphi(x)$ and use that 
acylindrically hyperbolic groups have trivial positive theory (\cite[Corollary 1.8]{AF}).

Second, for existential formulae we have the following.

\begin{lemma}
\label{lem: existential finite in AH is trivial in free}
    Let $\varphi$ be an existential (or quantifier-free) formula. If there is an acylindrically hyperbolic group $G$ such that the set $G_{\varphi}$ is finite, then $\varphi$ is trivial or defines the empty set in the class $\mathcal{F}$ of finitely generated non-abelian free groups. 
    
\end{lemma}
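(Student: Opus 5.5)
We argue by contraposition: assuming $\varphi$ is neither trivial nor empty in $\mathcal{F}$, we show that $G_\varphi$ is infinite for every acylindrically hyperbolic group $G$. The two ingredients are that existential formulae are preserved upward under embeddings, and that acylindrically hyperbolic groups contain non-abelian free subgroups.

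First, using the Remark on elementary equivalence of the groups in $\mathcal{F}$, the assumption means there is a non-trivial element $u\in (F_2)_\varphi$. Next, let $G$ be acylindrically hyperbolic. By \cite[Theorem 1.2]{OsinAH} (see also \cite[Theorem 6.14]{DGO}), $G$ contains a subgroup $H\cong F_2$. Since $\varphi$ is existential (or quantifier-free), any witness tuple $\overline{y}$ in $H$ for $\varphi(h)$ is also a witness in $G$. Equalities and inequalities between words are preserved under injective homomorphisms, so $H_\varphi\subseteq G_\varphi$; this is exactly the earlier Remark on existential formulae and overgroups.

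It then suffices to show that $H_\varphi=(F_2)_\varphi$ is infinite. I would argue this directly in the free group. Every non-trivial element of $F_2$ has infinite order, and every non-trivial conjugacy class in $F_2$ is infinite, since $F_2$ is icc. Because $G_\varphi$ is always closed under conjugation, $(F_2)_\varphi\supseteq[u]$, and $[u]$ is infinite. Hence $G_\varphi$ is infinite, contradicting the hypothesis.

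The only delicate point is the upward persistence of existential formulae when inequalities $\neq$ occur. Injectivity of the embedding $H\hookrightarrow G$ handles this, since a word evaluated at elements of $H$ is non-trivial in $H$ if and only if it is non-trivial in $G$. The quantifier-free case is the special case with an empty tuple $\overline{y}$.
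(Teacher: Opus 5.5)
Your proof is correct and is essentially the paper's argument: negate the conclusion to obtain a non-trivial element of $(F_2)_\varphi$, use that free groups are icc to make this set infinite, and transfer infiniteness to $G$ via a non-abelian free subgroup and the upward persistence of existential formulae under embeddings. Your explicit use of the elementary-equivalence remark to pass to $F_2$, and your check that inequalities survive because the embedding is injective, just spell out steps the paper leaves implicit.
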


\begin{proof}

    Suppose by contradiction that $\varphi$ does not define the empty set and is non-trivial in $\mathcal{F}$. By the icc property it follows that the set defined by $\varphi$ in any finitely generated non-abelian free group is infinite. Since any acylindrically hyperbolic group $G$ contains a finitely generated non-abelian free subgroup $F$ and $F_\varphi \subseteq G_\varphi$, the set $G_\varphi$ must also be infinite.
    \end{proof}

The following example shows that, if one allows torsion, the converse to Lemma \ref{lem: existential finite in AH is trivial in free} is not true. 

\begin{example}
\label{ex: in infinite dihedral group there are non-trivial def sets that are trivial in free groups}
Let $D_\infty:=\langle r,s\mid s^2=1,\ r^s=r^{-1} \rangle$ be the infinite dihedral group, and take $G$ to be the free product of $D_\infty$ with some free group $F_n$. This is a relatively hyperbolic group, so acylindrically hyperbolic as well.
\begin{itemize}
  \item[(i)] The set of elements of order $2$ is infinite in $D_\infty$, but clearly trivial in any non-abelian free group. Note that this set is given by a quantifier-free formula. 

\item[(ii)] For existential formulae, consider the formula $\gamma$ given by 
$$\exists y_1,y_2,y_3\colon x=[y_1,y_2]\wedge x=y_3^n,$$ where $n >1$ is some fixed positive integer. It was proved by Schützenberger (\cite{Sch}, see also \cite[Lemma 36.4]{Baumslag60}) that the only solution $(y_1, y_2, y_3)$ to the equation $[y_1,y_2]=y_3^n$ ($n\geq 2$) in free groups is the trivial one, hence $\gamma$ is a trivial formula in $\mathcal{F}$. However, in $D_\infty$ the element $r^{2}$ is a proper power and $r^2=(sr^{-1}s^{-1})r=[s^{-1},r]$.
  
\end{itemize}
The two items above show that there exist acylindrically hyperbolic groups where the converse of Lemma \ref{lem: existential finite in AH is trivial in free} does not hold.
\end{example}

From now on we will focus on torsion-free groups and existential formulae: given an existential formula $\varphi$ and a torsion-free acylindrically hyperbolic group $G$, we ask how the triviality of $\varphi$ in $\mathcal{F}$ affects the triviality of $\varphi$ in $G$.

The converse of Lemma \ref{lem: existential finite in AH is trivial in free} holds true if $\mathcal{U}$ is a subclass of torsion-free acylindrically hyperbolic groups whose members are universally equivalent to finitely generated non-abelian free groups. These are exactly the non-abelian limit groups (\cite[Theorem 5.1]{CG}).

\begin{corollary}
    An existential (or quantifier-free) formula $\varphi$ is trivial in the class of non-abelian limit groups if and only if it is trivial in the class $\mathcal{F}$ of finitely generated non-abelian free groups.
\end{corollary}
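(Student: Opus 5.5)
The plan is to prove the two implications separately, using that non-abelian limit groups are exactly the groups universally equivalent to a finitely generated non-abelian free group (\cite[Theorem 5.1]{CG}). The key observation is that, for an existential (or quantifier-free) formula $\varphi(x)$, the statement ``$\varphi$ is trivial in $G$'' can be written as a universal sentence
\[
\sigma_\varphi:\quad \forall x\colon \bigl(\varphi(x)\rightarrow x=1\bigr).
\]
Indeed, $\varphi(x)$ is of the form $\exists\overline{y}\colon \psi(x,\overline{y})$ with $\psi$ quantifier-free. So $\neg\varphi(x)\vee x=1$ is equivalent to $\forall\overline{y}\colon(\neg\psi(x,\overline{y})\vee x=1)$, and $\sigma_\varphi$ is equivalent to $\forall x\,\forall\overline{y}\colon(\neg\psi(x,\overline{y})\vee x=1)$, which is universal.

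One subtlety is the convention that ``trivial'' means $G_\varphi=\{1\}$ exactly, not merely $G_\varphi\subseteq\{1\}$. So I also need to record that $1\in G_\varphi$, and this is expressed by the sentence $\tau_\varphi:=\varphi(1)$. This sentence is existential, namely $\exists\overline{y}\colon\psi(1,\overline{y})$. Universal equivalence also gives agreement on existential sentences, since these are negations of universal ones.

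With these two sentences, $\varphi$ is trivial in a group $G$ if and only if $G\models\sigma_\varphi\wedge\tau_\varphi$. If $G$ is a non-abelian limit group, then by \cite[Theorem 5.1]{CG} it has the same universal theory as $F_2$, and hence also the same existential theory. Therefore $G\models\sigma_\varphi\wedge\tau_\varphi$ if and only if $F_2\models\sigma_\varphi\wedge\tau_\varphi$. By elementary equivalence of the groups in $\mathcal{F}$ (Sela, Kharlampovich--Myasnikov), or more simply because all $F_n$ with $n\ge 2$ are universally equivalent to each other, this holds if and only if every member of $\mathcal{F}$ satisfies it.

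Both directions then follow at once. Every group in $\mathcal{F}$ is itself a non-abelian limit group, so triviality in the class of non-abelian limit groups implies triviality in $\mathcal{F}$ trivially. For the converse, triviality in $\mathcal{F}$ means $F_2\models\sigma_\varphi\wedge\tau_\varphi$, and this transfers to every non-abelian limit group by the previous paragraph.

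The only step needing care is checking that $\sigma_\varphi$ is genuinely universal. This is exactly where the hypothesis that $\varphi$ is existential (or quantifier-free) is used: a general formula would produce a sentence with alternating quantifiers, and universal equivalence would not suffice. Beyond that, I expect no real obstacle.
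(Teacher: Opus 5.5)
Your argument is correct, and for the substantive direction (from $\mathcal{F}$ to non-abelian limit groups) it is the paper's argument. Both rewrite $\forall x\,(\varphi(x)\rightarrow x=1)$ as the universal sentence $\forall x,\overline{y}\colon \neg\psi(x,\overline{y})\vee x=1$ and transfer it using \cite[Theorem 5.1]{CG}.

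The differences are elsewhere. For the other direction, the paper goes through Lemma~\ref{lem: existential finite in AH is trivial in free}, which uses that non-abelian limit groups are acylindrically hyperbolic and that free groups are icc. That lemma gives that $\varphi$ is trivial or empty in $\mathcal{F}$, and the paper then excludes emptiness by transferring $\forall x,\overline{y}\colon\neg\psi(x,\overline{y})$. Your remark that every group in $\mathcal{F}$ is itself a non-abelian limit group makes this direction immediate and avoids all of that.

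In the main direction, the paper's universal sentence by itself only gives $G_\varphi\subseteq\{1\}$, and the paper never checks $1\in G_\varphi$. Your existential sentence $\tau_\varphi=\varphi(1)$, transferred by existential equivalence, supplies exactly this point. Alternatively, existential sentences pass to overgroups, and non-abelian limit groups contain non-abelian free subgroups.

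So your route is shorter and, for the definition $G_\varphi=\{1\}$, slightly more complete than the paper's.
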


\begin{proof}
    Write $\varphi(x;\overline{y})$ as $\exists \overline{y}: \phi(x;\overline{y})$, where $\phi$ is quantifier-free and we allow for the case where $\overline{y}$ does not occur in $\phi$.
    
    Suppose that $\varphi$ is trivial in the class of non-abelian limit groups.
    Then it follows from the previous Lemma \ref{lem: existential finite in AH is trivial in free} that $\varphi$ defines the trivial or empty set in groups in $\mathcal{F}$. If $\varphi$ defines the empty set in a group $F\in\mathcal{F}$, then the universal sentence
    $\forall x, \overline{y}: \neg\phi(x;\overline{y})$
    holds true in $\mathcal{F}$, and hence in non-abelian limit groups, a contradiction. 
    
    For the other direction, suppose that the formula $\exists\overline{y}\colon\phi(x;\overline{y})$ is trivial in~$\mathcal{F}$. Hence the sentence $$\exists x,\overline{y}\colon\phi(x;\overline{y})\implies x=1$$ 
    is true in the class $\mathcal{F}$ of finitely generated non-abelian free groups. This sentence is equivalent to the universal sentence 
    $$\forall x,\overline{y}\colon\neg\phi(x;\overline{y})\vee x=1,$$ 
    which therefore must hold also in the class of limit groups.
\end{proof}

This observation yields the following natural question.

\begin{question}
    Which existential formulae are trivial in free groups?
\end{question}

Even if a general answer to this question seems out of reach, we focus on special kinds of existential formulae given by positive boolean combinations of word formulae and provide some examples of existential formulae of this form that are (non-)trivial in $\mathcal{F}$. These formulae can be written as
\begin{equation}
\label{eq: positive boolean combinations of words}
\exists\overline{y}:\bigvee_{j=1}^m \left(x=w_{j,1}(\overline{y})\wedge\cdots\wedge x=w_{j,k_j}(\overline{y})\right), 
\end{equation}
where $m$ is a positive integer and the $w_{j,i}$ are words on disjoint sets of variables.
It is clear that such a formula is trivial if and only if each term $$\mathrm{b}(x):=\exists\overline{y}:~\left(x=w_{j,1}(\overline{y})\wedge\cdots\wedge x=w_{j,k_j}(\overline{y})\right)$$ 
in the disjunction is trivial. We can therefore restrict our attention to these terms, and we say that such a formula is the (conjunctive) formula \textit{associated to the words $w_{j,1},\ldots, w_{j, k_j}$}, or, equivalently, the formula associated to the equation $w_{j,1}(\overline{y})=\cdots =~w_{j,k_j}(\overline{y})$. Note that $\mathrm{b}$ is the conjunction of the word formulae associated to $w_{j,1},\ldots, w_{j,k_j}$.

First, we observe that, in order for $\mathrm{b}$ to be trivial, it must be associated to at least two (non-trivial) words. Otherwise $\mathrm{b}$ would define a verbal set, which is infinite in any non-abelian free group.

We also remark that we can exclude the case when all words associated to $\mathrm{b}$ are non-commutator words. A \textit{non-commutator word} is a word that is not contained in the commutator subgroup of the free group (or, equivalently, a word in which the sum of the exponents of at least one variable is not zero). P.~Hall himself proved that non-commutator words are concise (see \cite{Rob72} Lemma 4.27). More generally, in \cite{CP} it is shown that positive boolean combinations of non-commutator word formulae are concise (Proposition 2.8 and Example 2.9). The fact that the conjunction of non-commutator word formulae is concise relies on the fact that the set defined by such formulae in non-torsion groups must be infinite. This implies that these formulae never define a trivial set in $\mathcal{F}$.

Therefore the easiest case to consider is a formula defining elements that can be expressed simultaneously as a product of commutators and a product of proper powers.

\begin{example}
   Consider the formula $\mathrm{b}(x)$ given by $$\exists y_1,y_2,y_3\colon x=[y_1,y_2]\wedge x=y_3^n,$$ where $n$ is any fixed positive integer greater than $1$. As already recalled in Example \ref{ex: in infinite dihedral group there are non-trivial def sets that are trivial in free groups}, the only solution $(y_1, y_2, y_3)$ to the equation $[y_1,y_2]=y_3^n$ ($n\geq 2$) in free groups is the trivial one, hence $\mathrm{b}$ is a trivial formula in $\mathcal{F}$.
\end{example}

\begin{example}
\label{ex: commutator=product of two powers}
The next case to consider is the equation $[y_1,y_2]=y_3^ny_4^n$ for $n\geq 2$.
For $n=2$, generalising a result of Lyndon and Newman (\cite{LN}), it was proven in \cite{Sar} that, if $F_2$ is the free group with generators $x,y$, the equation $[x^r,y^s]=a^2b^2$ has a solution if and only if $rs$ is even. In particular, the formula associated to the commutator word and the product of two squares is never trivial in non-abelian free groups. 
More generally, according to \cite[Corollary 11]{Bar}, the equation $$[x^r,y^s]=a^nb^n$$ has a solution in $F_2=\langle x,y\rangle$ if and only if $n\vert r$ or $n\vert s$. In particular, the formula associated to the words $[y_1,y_2]$ and $y_3^ny_4^n$ is never trivial in non-abelian free groups.     
\end{example}

Going further, if we consider the product of three powers, it is well known that, in any group, every commutator can be written as a product of three squares. If instead one looks at products of three cubes, by \cite{AM} the equation $[y_1,y_2]=y_3^3y_4^3y_5^3$ has non-trivial solutions in $\mathcal{F}$. Therefore the formulae associated to these equations are never trivial in $\mathcal{F}$. 

If, on the other hand, one considers equations of the form $\gamma=z^n$, where $\gamma$ is a product of commutators, the problem of determining the minimal number of commutators in the product on the left hand side for which this equation can have a non-trivial solution in free groups was completely solved in \cite{DH}. Namely, it is proven in loc.~cit.~that if in a free group there exist elements $x_1,\ldots, x_k,y_1,\ldots, y_k, z$ such that 
\begin{equation}
\label{eq: prod of comm = power}
[x_1,y_1]\cdots[x_k,y_k]=z^n
\end{equation}
and $n\geq 2k$, then $z=1.$ In our terminology, this means that the formula associated to the equation (\ref{eq: prod of comm = power}) is trivial in $\mathcal{F}$ whenever $n\geq 2k$.

Putting these results together we can construct further instances of trivial and non-trivial existential formulae in $\mathcal{F}$.

\begin{proposition}
    Consider the formula $\mathrm{b}$ associated to the equation
    $$\prod_{i=1}^n{[x_i,y_i]}=\prod_{j=1}^mz_j^{a_j}\prod_{k=1}^l{[u_k,v_k]},$$ where $n,m,l$ are non-negative integers, the exponents $a_j$ are integers and all the variables are pairwise distinct. 
    
    \begin{enumerate}
    
    \item If both $n$ and $l$ are not zero, then $\mathrm{b}$ is not trivial in $\mathcal{F}$.
    
    \item If $l=0$, $n\geq 1$, and $m\geq 2$, then $\mathrm{b}$ is not trivial in $\mathcal{F}$. 
    
    \item If $l=0$, $n\geq 1$, $m=1$ and $2n\leq a_1$, then $\mathrm{b}$ is trivial in $\mathcal{F}$.

    \end{enumerate}
\end{proposition}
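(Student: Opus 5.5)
The formula $\mathrm{b}$ defines the set of elements $x$ such that $x$ equals the left-hand side and also the right-hand side for some choice of the variables. So $\mathrm{b}(x)$ is $\exists\,\overline{y}\colon x=\prod_{i=1}^n[x_i,y_i]\wedge x=\prod_j z_j^{a_j}\prod_k[u_k,v_k]$. For (1) and (2) it suffices, in a fixed $F\in\mathcal{F}$ (say $F_2=\langle a,b\rangle$, and using that all groups in $\mathcal{F}$ are elementarily equivalent), to exhibit one non-trivial element lying in both verbal sets. For (3) we reduce to the theorem of \cite{DH} quoted above. The plan is to treat the three items separately, each time by specialising variables.

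\emph{Item (1).} Take $x=[a,b]\neq 1$. Put $x_1=a$, $y_1=b$ and $x_i=y_i=1$ for $i\geq 2$, so the left side equals $[a,b]$. On the right, put all $z_j=1$, $u_1=a$, $v_1=b$ and the remaining $u_k=v_k=1$. Then both sides equal $[a,b]$, which is non-trivial, so $\mathrm{b}$ is non-trivial.

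\emph{Item (2).} Here $l=0$ and $m\geq 2$. Set $z_j=1$ for $j\geq 3$, so the right side becomes $z_1^{a_1}z_2^{a_2}$. If some $a_j=0$ the problem is degenerate and easy. If $a_1=a_2=0$, take $z$ arbitrary and the right side is $1$, so this subcase needs care. I would interpret the proposition as having non-zero exponents, or else observe that when $a_1a_2\neq0$ we can choose suitable elements. Writing $p=a_1$ and $q=a_2$ (both non-zero), we need a non-trivial commutator of the form $z_1^{p}z_2^{q}$. Invoking \cite[Corollary 11]{Bar} in the form quoted in Example \ref{ex: commutator=product of two powers} handles equal exponents. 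For general $p,q$ I would use a direct construction: $[c^{p},d^{q}]=c^{p}\,(d^{q}c^{-p}d^{-q})=c^{p}(dc^{-1}d^{-1})^{p}\cdot$\,(correction). More cleanly, $[g,h]=g\cdot(hg^{-1}h^{-1})$. Taking $g=c^{p}$ gives $hg^{-1}h^{-1}=(hc^{-1}h^{-1})^{p}$, so $[c^p,h]=c^{p}(hc^{-1}h^{-1})^{p}$, a product of two $p$th powers. To get exponents $p$ and $q$, choose $c=w^{q}$ and use $hc^{-1}h^{-1}=(hw^{-1}h^{-1})^{q}$. Then $[w^{pq},h]=(w^{q})^{p}\,((hw^{-q}h^{-1})^{p})$, which needs rearranging: $z_1=w^{q}$ gives $z_1^{p}=w^{pq}$, and $(hw^{-1}h^{-1})^{pq}=z_2^{q}$ with $z_2=(hw^{-1}h^{-1})^{p}$. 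Hence $[w^{pq},h]=z_1^{p}z_2^{q}$, and taking $w=a$, $h=b$ gives a non-trivial value. This identity is the key step. The remaining commutators on the left are set to be trivial.

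\emph{Item (3).} Here the equation is $\prod_{i=1}^n[x_i,y_i]=z_1^{a_1}$ with $a_1\geq 2n$. By \cite{DH}, any solution in a free group has $z_1=1$, so $x=z_1^{a_1}=1$. The identity is always a solution, so the defined set is exactly $\{1\}$ and $\mathrm{b}$ is trivial. The main obstacle is item (2), because general exponents and zero exponents must be handled. I would rely on the explicit identity above and treat $a_j=0$ separately: if only one exponent is zero we fall back on a single power equal to a commutator, which is problematic. In that case I would instead choose another index $j$ with $a_j\neq0$, or note that the statement implicitly assumes non-zero exponents.
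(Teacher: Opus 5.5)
Your proof is correct. Items (1) and (3) are the paper's arguments. Item (2) takes a genuinely different and more elementary route.

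\textbf{Item (2).} The paper argues as follows:
\begin{itemize}
\item it fixes non-commuting $z_1,z_2$;
\item it invokes the Big Powers condition to get $(z_1^{a_1})^c(z_2^{a_2})^c\neq 1$;
\item it then appeals to Bardakov's criterion that $[x^r,y^s]=\alpha^c\beta^c$ is solvable in $F_2$ when $c\mid r$ or $c\mid s$.
\end{itemize}
You instead write down the identity $[w^{pq},h]=(w^{q})^{p}\,(hw^{-p}h^{-1})^{q}$, with $[g,h]=ghg^{-1}h^{-1}$. This exhibits the single element $[a^{pq},b]\in F_2$ both as a commutator and as $z_1^{a_1}z_2^{a_2}$.

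What your route buys:
\begin{itemize}
\item It handles unequal exponents $a_1\neq a_2$ directly. The paper's route needs extra divisibility bookkeeping, namely $a_1a_2\mid c$, to turn $\alpha^c\beta^c$ into $z_1^{a_1}z_2^{a_2}$.
\item It needs no Big Powers argument, since $[a^{pq},b]\neq 1$ is immediate in a free group.
\item It is essentially the easy ``if'' direction of Bardakov's criterion made explicit. The paper's citation adds nothing beyond this here.
\item The paper's write-up chooses $z_1,z_2$ before solving for $x_1,y_1$, which is looser than your argument.
\end{itemize}
Your commutator convention differs from the paper's $[g,h]=g^{-1}h^{-1}gh$. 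This is harmless, because the sets of products of commutators coincide.

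\textbf{Zero exponents.} Drop the remark that the case $a_j=0$ is ``degenerate and easy''. Item (2) as literally stated can fail when fewer than two of the $a_j$ are non-zero. For example, for $[x_1,y_1]=z_1^{2}z_2^{0}$ the formula is trivial by Sch\"utzenberger's theorem, or by the Duncan--Howie bound with $k=1$. So the statement must be read with at least two non-zero exponents. The paper's proof tacitly assumes this too, since it needs $z_1^{a_1},z_2^{a_2}$ to be non-trivial. Under that reading, your fallback is exactly right: reindex so that $a_1a_2\neq 0$ and set the remaining $z_j$ to $1$.
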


\begin{proof}
Let $F$ be any group in $\mathcal{F}$. 
\begin{enumerate}

    \item Assume without loss of generality that $1\leq n\leq l$.  
    Then we can choose two non-commuting elements $x_1, y_1$ in $F$ and set $u_1=x_1$, $v_1=y_1$, and every other variable equal to $1$. The case $l\leq n$ is identical.

    \item Set $z_j=1$ for all $j\geq 3$,  $x_i=y_i=1$ for $i\geq 2$, and choose $z_1$ and $z_2$ to be two non-commuting elements in $F$. Because of the `Big Powers' condition (see Definition \ref{def: alternative BPC}), for any big enough $c$, the product $(z_1^{a_1})^c(z_2^{a_2})^c$ is not trivial. Then one can find a non-trivial solution to the equation $[x_1,y_1]=(z_1^{a_1})^c(z_2^{a_2})^c$ taking $x_1$ and $y_1$ to be suitable powers of two distinct generators of $F$, and using the result recalled in Example \ref{ex: commutator=product of two powers}. More precisely, if $x$ and $y$ are two elements of $F$ belonging to a free basis, we can take $x_1=x^r$ and $y_1=y^s$ with $c$ dividing at least one between $r$ and $s$.

    \item Our equation reduces to equation (\ref{eq: prod of comm = power}) and we can directly use the fact that this has only the trivial solution for $a_1\geq 2n$. 
\end{enumerate}
   
\end{proof}

\section{Further classes of groups}

In this section we explore conciseness of formulae in classes of groups that are related to the classes seen before, namely groups with the `Big Powers' condition, icc groups and torus knot groups.

\subsection{Groups with the `Big Powers' condition}
\label{sec: groups with BP}
In this section we prove that every verbal set in groups satisfying the `Big Powers' condition is necessarily infinite. 

In free groups the following, so-called `Big Powers', condition holds true.

\begin{proposition}[\cite{B}]
 Let $g_1, \ldots, g_k$ be elements in a free group $F_n$, and let $f$ be an element that does not commute with any of them. Then there exists $N\in\mathbb{N}$ such that, for any $n_i\in\Z$ with $|n_i|\geq N, m \in \mathbb{N}$ and $j_i \in \{1, \dots k\}$,
 $$g_{j_1} f^{n_1}g_{j_2} f^{n_2} \dots g_{j_m}f^{n_m} \neq 1_{F_n}.$$
\end{proposition}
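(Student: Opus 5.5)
The plan is to argue combinatorially with reduced words in $F_n$, following Baumslag's original approach. The idea is to show that, for $N$ large, each block $f^{n_i}$ keeps a long uncancelled middle segment once the product is freely reduced.

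First I normalise. Since $f$ does not commute with $g_1$, we have $f\neq 1$. Write $f=u c u^{-1}$ as a reduced product, with $c$ cyclically reduced, and conjugate the whole product by $u$. This replaces each $g_i$ by $g_i':=u^{-1}g_iu$ and $f$ by $c$. Non-commutation and (non-)triviality are preserved, so I may assume $f=c$ is cyclically reduced. In a free group the centraliser of $c\neq1$ is the maximal cyclic subgroup $\langle r\rangle$ containing $c$. Since $c$ is cyclically reduced, so is $r$, and $c=r^{e}$ with $e\neq 0$. The hypothesis therefore says $g_j\notin\langle r\rangle$ for every $j$.

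The key step, and the main obstacle, is a bounded cancellation lemma. I will show there is a constant $L$, depending only on $c$ and the finitely many $g_j$, such that for all $a,b$ the free reduction of $c^{a}g_jc^{b}$ deletes at most $L$ letters from the reduced word $c^{a}$ and at most $L$ from $c^{b}$. Concretely, I expect $L=|g_j|+2|c|$ to work, maximised over $j$. The argument is as follows. Because $c$ is cyclically reduced, $c^{a}$ and $c^{b}$ are freely reduced periodic words, so any cancellation against them happens only at their junctions with $g_j$.
\begin{itemize}
\item Suppose $g_j$ cancels more than $|g_j|+|c|$ letters into $c^{b}$ (or into $c^{a}$). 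Then $g_j$ is absorbed entirely, and the surviving letters of $c^{a}$ and $c^{b}$ meet and cancel for more than a full period. The periodicity of the two periodic words then forces $g_j$ to equal a power of the root $r$ of $c$, contradicting $g_j\notin\langle r\rangle$.
\item The subtle case is when $c^{a}$ and $c^{b}$ cancel against each other with $a,b$ of opposite signs. Here I will use that a word $w$ satisfying $c^{a}w c^{b}$ with overlap longer than $|c|+|w|$ must have $w$ commuting with $c$; this is the standard fact that if two periodic words share a segment longer than the sum of their periods, then their periods are powers of a common word.
\end{itemize}

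Given the lemma, I choose $N$ with $N|c|>2L$, say $N=2L+1$. Take any product $g_{j_1}c^{n_1}\cdots g_{j_m}c^{n_m}$ with all $|n_i|\ge N$. Each block $c^{n_i}$ has length at least $N|c|>2L$. Reduction at each junction consumes at most $L$ letters from each side of every block, and this holds inductively because the blocks shield the $g$'s from each other. Hence every block retains a nonempty middle segment in the reduced form, and in particular the product is nontrivial.

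Two small points need care. First, the cyclic case $m=1$, i.e.\ $g_{j_1}c^{n_1}$: equality to $1$ would put $g_{j_1}$ in $\langle c\rangle$. Second, the wrap-around junction between $c^{n_m}$ and $g_{j_1}$ is not actually adjacent in the word, so only linear reduction is needed and no cyclic reduction. Finally, undoing the conjugation by $u$ gives the statement for the original $f$ and $g_i$.
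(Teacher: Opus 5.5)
The paper does not prove this statement; it quotes it from Baumslag \cite{B}. Your plan is the standard combinatorial proof and it goes through: conjugate $f$ to a cyclically reduced $c$, prove bounded cancellation for $c^{a}g_jc^{b}$, then take $N$ large enough that every block $c^{n_i}$ keeps a middle segment. However, the justification of your key lemma is wrong when $a$ and $b$ have the same sign. Your first bullet says that a long cancellation between the leftover pieces of $c^{a}$ and $c^{b}$ would force $g_j\in\langle r\rangle$. That is not what prevents it. For $a,b>0$ and $g_j=c^{k}$, at most $|g_j|$ letters are deleted, so membership in $\langle r\rangle$ produces no long cancellation at all. What actually rules it out is the following. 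The leftover prefix of $c^{a}$ and the leftover suffix of $c^{b}$ are subwords of powers of $c$ of the same sign. A cancellation of $|c|$ letters would therefore make a cyclic permutation of $c$ equal to the inverse of another cyclic permutation of $c$, i.e.\ $c$ would be conjugate to $c^{-1}$. You need to add the fact that no nontrivial element of a free group is conjugate to its inverse.

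In the opposite-sign case your conclusion is right. But the two leftover pieces have the same period, so the Fine--Wilf statement you quote gives nothing by itself. What the argument must supply is phase alignment, and that comes from $r$ not being a proper power. Let $p$ be the leftover prefix of $c^{a}$ and $q$ the leftover suffix of $c^{b}$. Then $p$ and $q^{-1}$ are prefixes of the same periodic word $(r^{\pm1})^{\infty}$. A cancellation of at least $|r|$ letters between $p$ and $q$ is a common suffix of $p$ and $q^{-1}$ of that length, which forces $|p|\equiv|q^{-1}|\pmod{|r|}$. Hence $pq\in\langle r\rangle$ and $g_j=c^{-a}\,pq\,c^{-b}\in\langle r\rangle$. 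Together the two cases give the lemma with $L=\max_j|g_j|+|c|$.

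Finally, make the shielding step precise rather than inductive. Write each $c^{n_i}=c^{n'_i}c^{n''_i}$ with both factors longer than $L$ letters, and apply the lemma to each $c^{n''_{i-1}}g_{j_i}c^{n'_i}$, with $g_{j_1}c^{n'_1}$ and $c^{n''_m}$ at the two ends. Every interior junction between consecutive reduced pieces is then flanked by a nonempty suffix of $c^{n'_i}$ and a nonempty prefix of $c^{n''_i}$, i.e.\ it lies inside the reduced word $c^{n_i}$. So the concatenation is already reduced and nonempty.
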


In \cite{KMS} a more general `Big Powers' condition is introduced.

\begin{definition}[BP Condition]
    \label{def: alternative BPC}
 Let $G$ be a group, $k$ be any positive integer and let $u=(u_1,\ldots, u_k)$ be a sequence of non-trivial elements of $G$. One says that $u$ is 
 \begin{enumerate}
     \item \emph{generic}, if any two consecutive elements in $u$ do not commute, i.e. $$[u_i,u_{i+1}]\neq 1$$ for every $i\in\lbrace 1,\ldots, k-1\rbrace$;

     \item \emph{independent}, if there exists $N=N(u)\in\N$ such that, for any $\alpha_1,\ldots,\alpha_k$ with each $\alpha_i\geq N$, $$u_1^{\alpha_1}\cdots u_k^{\alpha_k}\neq 1.$$
 \end{enumerate}
The group $G$ satisfies the \emph{`Big Powers' condition} (BP condition) if every generic sequence is independent. In this case one says that $G$ is a BP-group.   
\end{definition}

Note that every group satisfying the BP condition must be torsion-free, as every non-trivial element forms a generic sequence.
Examples of groups for which the BP condition holds are free and, more generally, torsion-free hyperbolic groups and all of their subgroups and fully residually free groups (\cite{KMS}). These examples are all acylindrically hyperbolic or infinite cyclic. However, there are examples of groups with this property that are not acylindrically hyperbolic. For instance, any direct product $G\times A$, where $G$ is torsion-free hyperbolic and $A$ is a non-trivial torsion-free abelian group satisfies the `Big Powers' condition (see \cite[Proposition 3]{KMS}) but is not acylindrically hyperbolic. 

Verbal sets in torsion-free abelian groups are either trivial or infinite. Moreover, virtually abelian groups satisfy the BP condition if and only if they are abelian, because these groups satisfy a law (\cite[Theorem 1]{KMS}). For verbal sets in non-abelian BP-groups we have the following.

\begin{proposition}
    Let $G$ be a non-abelian group satisfying the BP condition. Then every verbal set $G_w$ of a non-trivial word $w$ is infinite. In particular, every word is concise in~$G$.
\end{proposition}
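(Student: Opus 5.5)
The plan is to exhibit, for a non-trivial word $w=w(x_1,\ldots,x_n)$, an explicit infinite family of pairwise distinct values of $w$ in $G$, using the BP condition twice: once to show the values are non-trivial and once to show they are distinct. Conciseness then follows at once, since $G_w$ is never finite unless $w$ is trivial, and for trivial $w$ we have $G_w=\{1\}$ and $w(G)=1$. Recall that $G$ is torsion-free, since it is a BP-group. Fix two non-commuting elements $a,b\in G$; then $a^{\pm1}$ and $b^{\pm1}$ also pairwise do not commute.

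The first step is a reduction to two letters. The map $x_i\mapsto s^{i}ts^{-i}$ embeds $F_n$ into the free group $\langle s,t\rangle$. We may replace $w$ by a conjugate, since $G_w$ is closed under conjugation, and conjugates of $w$ have the same values up to conjugation. So the image of $w$ is a non-trivial reduced word, which we write as
$$W(s,t)=s^{e_1}t^{f_1}s^{e_2}t^{f_2}\cdots s^{e_r}t^{f_r},$$
with all $e_j,f_j\neq 0$ except possibly $e_1$ or $f_r$. If $W$ is a pure power $s^{e}$ or $t^{f}$, the values $g^{e}$ are pairwise distinct for distinct $g\in\langle a\rangle$ by torsion-freeness, and we are done. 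Otherwise, for a positive integer $K$ we substitute $s=a^K$ and $t=b^K$. This gives the value
$$g_K:=a^{Ke_1}b^{Kf_1}\cdots a^{Ke_r}b^{Kf_r}\in G_w.$$
We rewrite $g_K$ as $u_1^{K|e_1|}u_2^{K|f_1|}\cdots$, where $u=(a^{\pm1},b^{\pm1},\ldots)$ carries the signs. The sequence $u$ is generic, because consecutive terms are powers of distinct elements among $a^{\pm1},b^{\pm1}$, which do not commute. By the BP condition there is $N(u)$ such that $g_K\neq 1$ whenever $K\geq N(u)$.

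The second step, which I expect to be the main obstacle, is to show that infinitely many of the $g_K$ are distinct. Consider $g_K^{-1}g_{K'}$. Written out, it is the reversed and inverted product for $g_K$ followed by the product for $g_{K'}$. At the junction, the two powers of the same element (of $b$ if $f_r\neq 0$, otherwise of $a$) merge into a single power with exponent $\pm (K'-K)\cdot(\text{fixed non-zero integer})$. After merging, consecutive letters again alternate between powers of $a$ and of $b$, so the resulting sign-sequence $u'$ is generic. It is also independent of $K,K'$, because it depends only on $W$. If the junction exponent happened to vanish, we would pass to the next junction; but choosing $K'>K$ avoids this case. The BP condition then provides an $N(u')$. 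We choose $K_1<K_2<\cdots$ with $K_1\geq \max(N(u),N(u'))$ and all gaps $K_{i+1}-K_i\geq N(u')$. Then every exponent in $g_{K_i}^{-1}g_{K_j}$ for $i<j$ is at least $N(u')$, so $g_{K_i}\neq g_{K_j}$. Hence $G_w$ is infinite.

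The delicate points are the bookkeeping of signs, since Definition \ref{def: alternative BPC} requires positive exponents, which we handle by absorbing signs into the $u_i$, and the possible degenerate junction. The fallback, if the junction is awkward, is to conjugate $g_K$ first so that the word starts with $s$ and ends with $t$. Then the junction always merges two genuine powers of $b$, with exponent $f_r(K'-K)\neq 0$.
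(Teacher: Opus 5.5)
Your proof is correct. Its engine is the same as the paper's: to separate two word values, you apply the BP condition to one value followed by the inverse of another, where the two syllables at the junction merge into a single power whose exponent is a large multiple of the difference of the parameters.

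You differ in how the generic sequence is produced. The paper substitutes $x_i\mapsto g_i^{m_i}$ for $n$ pairwise non-commuting elements $g_1,\ldots,g_n$. Obtaining these requires first that $G$ is not virtually abelian (via the fact that virtually abelian BP-groups are abelian), and then Neumann's theorem on infinite sets of pairwise non-commuting elements. You instead pull back along the embedding $x_i\mapsto s^its^{-i}$ of $F_n$ into $F_2$, using $W(g,h)=w(ghg^{-1},\ldots,g^nhg^{-n})\in G_w$. A single non-commuting pair $a,b$ therefore suffices, and it exists by non-abelianness alone, so your argument is self-contained.

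Your absorption of signs into the sequence $u$ also does real work. As written, the paper's inequalities $m_{i_j}k_j>N$ and $-m'_{i_j}k_j>N$ cannot all hold when a variable occurs with exponents of both signs, already for $w=[x_1,x_2]$. Replacing $g_{i_j}$ by $g_{i_j}^{\pm1}$, as you do, is exactly the adjustment needed there.

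One harmless slip: in $g_K^{-1}g_{K'}$ the syllables that meet are $a^{-Ke_1}$ and $a^{K'e_1}$ (or the powers of $b$ with exponent $f_1$ if $e_1=0$), i.e.\ the \emph{first} syllables of $W$. Your description via $f_r$ is the one for $g_Kg_{K'}^{-1}$. Either product works verbatim with your choice of $K_1<K_2<\cdots$.
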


\begin{proof}
    
    Let $w(x_1,\ldots,x_n)\in F_n$ be any non-trivial word. We show that $G_w$ is infinite. Assume that the reduced form of $w$ is $\prod_{j=1}^s{x_{i_j}^{k_j}}$, where $s$ is a positive integer, each $i_j$ belongs to $\lbrace 1,\ldots, n\rbrace$ and each $k_{j}$ is an integer. 
    
    Since $G$ is not virtually abelian, it contains infinite sets of pairwise non-commuting elements (\cite{N}), so one can find pairwise non-commuting elements $g_{1},\ldots, g_{n}$ and form a sequence of generic elements $(g_{i_1},\ldots, g_{i_s}, g_{i_{s-1}}, .\ .$ $\ldots, g_{i_1})$.  
     Then there exists a constant $N$ such that, for every $m_1,\ldots, m_{n}$ and every $m'_1,\ldots, m'_n$ satisfying $m_{i_j}k_j>N$ and $-m'_{i_j}k_j>N$ for $j\in\lbrace 1\ldots, s-1\rbrace$ and $(m_{i_s}-~m'_{i_s})k_s>N$, one has 
    $$g_{i_1}^{m_{i_1}k_1}\cdots g_{i_s}^{m_{i_s}k_s} g_{i_s}^{-m'_{i_s}k_s}\cdots g_{i_1}^{-m'_{i_1}k_1}\neq 1,$$
    i.e., $$w(g_1^{m_1},\ldots, g_n^{m_n})\neq w(g_1^{m'_1},\ldots, g_n^{m'_n}).$$
    It follows that one can find infinitely many pairwise distinct word values and hence $G_w$ is infinite.
\end{proof}

\subsection{Icc groups}
\label{sec: icc groups}

Most of the interest in icc groups comes from the theory of von Neumann algebras, as icc discrete groups are exactly those groups whose von Neumann algebra is a factor of type $II_1$.
We have already established that all formulae are concise in icc groups (see Corollary \ref{icc:concise}). 

Examples of icc groups include any free product of non-trivial groups except the infinite dihedral group (\cite[Example 5.15]{P}) (and more generally several groups acting on trees, \cite{C}), and various group extensions, see \cite{P}.

In this section we present various instances of icc groups that are not acylindrically hyperbolic. These provide further examples of groups where all formulae are concise that are not covered by Theorem \ref{thm: all formulae are concise in AH groups}. 
A simple example is given by the direct product of two icc groups. Indeed, the class of icc groups is closed under extensions (\cite[Corollary 1.3]{P}), but a direct product $G_1\times G_2$ can be acylindrically hyperbolic only if one of the factors $G_i$ is finite (\cite[Corollary 7.3 (b)]{O}).
Another family of examples is given by the Baumslag-Solitar groups $\mathrm{BS}(m,n)=\langle a, t\mid t^{-1}a^mt=a^n\rangle$ which, for every $m\neq \pm n$ with $m,n\neq 0$, are icc (\cite[Example 5.13]{P}) but not acylindrically hyperbolic (\cite[Example 7.4]{O}).

 \begin{example}[Infinite non-abelian simple groups and their icc extensions]
Infinite non-abelian simple groups are icc (\cite[Proposition 0.1]{P}) but they are not acylindrically hyperbolic. This follows from the fact that any acylindrically hyperbolic group $G$ is SQ-universal (\cite[Theorem 2.33]{DGO}) (i.e., every countable group can be embedded into a quotient of $G$), which implies that the group cannot be simple.

Now, let $G$ be an extension $1\rightarrow K\rightarrow G\rightarrow Q\rightarrow 1$, where $K$ is an infinite (non-abelian) simple group and $Q$ is any group. Let $\mathrm{FC}(Q)$ be the FC-centre of $Q$, let $\Theta\colon Q\rightarrow \mathrm{Out}(K)$ be the coupling associated to this extension and $\Phi$ be the restriction of $\Theta$ to $\mathrm{FC}(Q)$. Then, according to \cite[Examples 2.5]{P}, $G$ is icc whenever $\Phi$ is injective. However, since acylindrical hyperbolicity is closed under taking infinite normal subgroups (see for example \cite[Corollary 1.5]{O}), all these extensions give examples of icc groups that are not acylindrically hyperbolic. 

More generally, for the same reason, any icc extension of the form (infinite non-acylindrically hyperbolic group)-by-(any group) is an icc group that is not acylindrically hyperbolic. An example of this kind is given by the Euclidean group $\R^n\rtimes O(n)$ (\cite[Example 3.3]{P}).
\end{example}

Clearly a group is icc if and only if its FC-centre is trivial.
It is not difficult to prove that finitely generated just infinite groups that are not virtually abelian have trivial FC-centre (see for example \cite[Lemma 2.13]{SW}). It follows that: 

\begin{corollary}
  Every formula is concise in the class of finitely generated just infinite groups that are not virtually abelian.  
\end{corollary}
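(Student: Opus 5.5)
The plan is to reduce the statement to Corollary~\ref{icc:concise}: show that every group $G$ in the class is icc, and then every formula is concise in $G$. Since a group is icc exactly when its FC-centre is trivial, it suffices to prove that $\mathrm{FC}(G)=\{1\}$ whenever $G$ is finitely generated, just infinite and not virtually abelian. This is the fact quoted from \cite[Lemma 2.13]{SW}; I would either cite it or reprove it as follows.

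Suppose $\mathrm{FC}(G)\neq\{1\}$. The FC-centre is a normal subgroup, and a non-trivial normal subgroup of a just infinite group has finite index. Hence $\mathrm{FC}(G)$ has finite index in $G$, and so it is finitely generated, say by $g_1,\dots,g_r$. Each $g_i$ has finitely many conjugates, so its centraliser $C_G(g_i)$ has finite index in $G$. Therefore $\bigcap_i C_G(g_i)\cap \mathrm{FC}(G)$ has finite index in $G$. This intersection is contained in the centre of $\mathrm{FC}(G)$, so $\mathrm{Z}(\mathrm{FC}(G))$ has finite index in $G$. An abelian subgroup of finite index makes $G$ virtually abelian, which contradicts the hypothesis. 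Thus $\mathrm{FC}(G)=\{1\}$, $G$ is icc, and Corollary~\ref{icc:concise} finishes the proof.

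The only delicate point is passing from ``$\mathrm{FC}(G)$ has finite index'' to ``$G$ is virtually abelian''. This needs finite generation of $\mathrm{FC}(G)$, which holds because finite-index subgroups of finitely generated groups are finitely generated; that is exactly where the finite generation hypothesis is used. Two further remarks: the just infinite hypothesis already excludes finite groups, and if $G$ happened to be virtually abelian the argument would fail, as expected.
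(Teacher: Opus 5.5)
Your proof is correct and follows the paper's route: the paper also notes that icc is equivalent to trivial FC-centre, cites \cite[Lemma 2.13]{SW} for the fact that finitely generated just infinite groups that are not virtually abelian have trivial FC-centre, and concludes by Corollary~\ref{icc:concise}. Your argument for that lemma (a finite-index FC-centre is finitely generated, so its centre has finite index in $G$) is sound and simply supplies the step the paper leaves to the reference.
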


Examples of such groups are the Grigorchuk group, various branch groups and certain arithmetic groups modulo their centre, such as $\mathrm{SL}_n(R)$ for $n\geq 3$ and $\mathrm{Sp}_{2n}(R)$ for $n\geq 2$, where $R$ is the ring of integers of an algebraic number field (\cite{W}). 
Note that the Grigorchuk group is a torsion group, which implies that conciseness is trivially satisfied.
More generally, it was communicated to us by J. Moritz Petschick that weakly branch groups are icc. \label{prop:JMweaklybranch}

\subsection{Artin groups and torus knot groups}
\label{sec: dihedral artin groups}

In this section we consider Artin groups and focus on the dihedral ones, i.e.\ the ones that are generated by two elements. 

\begin{definition}
    Let $\Gamma$ be a finite simple graph, with vertex set $V(\Gamma)$ and with edges labelled by integers $m_{i,j} \in \Z_{\geq 2}$. The Artin group $A(\Gamma)$ is the group defined by the following presentation:
    \[ A(\Gamma) = \langle V(\Gamma) \; | \; _{m_{i,j}}(a_i,a_j) =  {}_{m_{i,j}}(a_j,a_i) \; \text{if the edge} \; \{a_i,a_j\} \; \text{is labelled} \; m_{i,j} \rangle,
    \]
    where $_{m_{i,j}}(a,b)$ is the word $abab \dots$ of length $m_{i,j}$. 
    \begin{itemize}
        \item[(i)] If $|V(\Gamma)| = 2$, we say $A(\Gamma)$ is a \emph{dihedral Artin group}.
        \item[(ii)]  An Artin group $A(\Gamma)$ is 
      of \emph{large type} if all edge labels $m_{i,j}$ are $\geq 3$.
      \item[(iii)] An Artin group $A(\Gamma)$ is \emph{spherical} if its corresponding Coxeter group (obtained by making all the generators have order two) is finite.
    \end{itemize}
\end{definition}

 For dihedral Artin groups, the presentation with respect to the standard generating set, for $m\geq 2$, is
\begin{equation}\label{eq:all groups}
    G(m) = \langle a,b \; | \; _{m}(a,b) =  {}_{m}(b,a) \rangle.
\end{equation}

In particular, for $m=2$ we get $\Z^2$ and for $m=3$ the braid group $B_3$ on three strands.

The dihedral Artin groups can be studied from various perspectives: they are one-relator groups with center, (finitely generated free)-by-cyclic groups with center, virtually direct products of hyperbolic groups, central extensions of virtually free groups and also subgroups of finite index in direct products of hyperbolic groups \cite{CHR, CGL}. To see some of these characterisations, use the following alternative presentations for $G(m)$.  For $m$ odd,
    \[ G(m) \cong \langle x,y \; | \; x^{2} = y^{m} \rangle,
    \]
    by setting $x = {}_{m}(a,b), y = ab$. For $m$ even,
    \[ G(m) \cong \langle x,y \; | \; y^{-1}x^{p}y = x^{p} \rangle = \mathrm{BS}(p,p),
    \]
    where $p= \frac{m}{2}$, by setting $x = ab, y = a$; $\mathrm{BS}(p,p)$ denotes a Baumslag-Solitar group. 

Thus we get the exact sequences:
\begin{equation}
\label{eq: exact sequence for G(m) is central extension of free product}
1 \rightarrow \Z=\langle x^2 \rangle \rightarrow G(m) \rightarrow \Z/2\Z \ast \Z/m \Z\rightarrow 1,
\end{equation}
for $m$ odd, and
\begin{equation}
\label{eveneq: exact sequence for G(m) is central extension of free product}
1 \rightarrow \Z=\langle y^p \rangle \rightarrow G(m) \rightarrow \Z/p\Z \ast \Z \rightarrow 1
\end{equation}
for $m$ even,
which shows that such a group is a central extension of a hyperbolic group.

\begin{remark}(on the `Big Powers' condition)
    Dihedral Artin groups do not satisfy the BP condition because of relations of the form $x^2=y^m$ (when $m$ is odd): one can take $(x, y^{-1})$ to be a generic sequence that is not independent. For $m$ even one can take the generic sequence $(x^{-1}, y^{-1}, x, y)$ that is not independent because $x^{kp}y^m=y^mx^{kp}$ holds for any $k,m>0$.
    Each Artin group that is not free has parabolic subgroups (coming from the defining edges) that are dihedral Artin, so Artin groups are not BP-groups (unless they are free).
 However, every dihedral Artin group contains a non-abelian free group, and hence verbal sets of non-trivial words in dihedral Artin groups are infinite. 
\end{remark}

 It is conjectured that all Artin groups are acylindrically hyperbolic, unless they are direct products or spherical. Many large-type Artin groups were proved to be acylindrically hyperbolic, but there  are outstanding  open cases.

All spherical Artin groups have an infinite center so they are neither acylindrically hyperbolic nor icc.
However, since they are linear (\cite{CW}), by \cite[Theorem 1.4.2]{S} we get
\begin{corollary}
    Every word is concise in every spherical Artin group.
\end{corollary}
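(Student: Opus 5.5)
The statement follows directly from the preceding remark that spherical Artin groups are linear, combined with the conciseness of words in linear groups cited as \cite[Theorem 1.4.2]{S}. No new argument about Artin groups is needed.

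First, I would recall that by \cite{CW}, every spherical Artin group $A(\Gamma)$ admits a faithful finite-dimensional linear representation over a field. In fact, spherical Artin groups are finitely generated subgroups of $\mathrm{GL}_n$ over a field of characteristic zero. Second, I would invoke \cite[Theorem 1.4.2]{S}. As cited here, it states that every word is concise in the class of linear groups: if $w$ takes finitely many values in a linear group $G$, then $w(G)$ is finite. Since conciseness is a property of the group itself and is preserved under isomorphism, it transfers to $A(\Gamma)$. This is because $A(\Gamma)_w$ and $w(A(\Gamma))$ correspond to $G_w$ and $w(G)$ under the embedding onto its image $G \leq \mathrm{GL}_n$.

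The only point to check is that the hypotheses of \cite[Theorem 1.4.2]{S} match the linearity provided by \cite{CW}. One needs to confirm whether the theorem requires a particular ground field (for instance characteristic zero) or finite generation. Both conditions are satisfied here: Artin groups are finitely generated, and the Cohen–Wales/Digne representations are over $\mathbb{Q}(q,t)$ or a subfield of $\R$, which has characteristic zero. I expect this compatibility check to be the only potential obstacle. If the cited theorem were stated only for linear groups over a specific field, I would embed the rational function field into $\mathbb{C}$ via transcendental specialisations to obtain a faithful complex representation.

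An alternative route, as a sanity check, avoids linearity entirely. Every verbal set of a non-trivial word in $A(\Gamma)$ is infinite, since spherical Artin groups of rank at least $2$ contain non-abelian free subgroups. Then Corollary \ref{cor:extAH}, or simply the infiniteness of verbal sets in free groups together with the fact that existential formulae are preserved under passing to supergroups, handles every non-abelian case. The abelian case $A(\Gamma)\cong\Z^k$ is covered because every formula is concise in abelian groups \cite{CP}. Under this approach the remaining detail is to handle the trivial word and the case where $w$ is a law in $A(\Gamma)$ separately: there $A(\Gamma)_w=\{1\}$, so $w(A(\Gamma))$ is trivially finite.
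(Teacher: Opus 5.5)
Your main argument is exactly the paper's: spherical Artin groups are linear by \cite{CW}, and words are concise in linear groups by \cite[Theorem 1.4.2]{S}. Your compatibility check of the hypotheses is fine. Your alternative route is also valid, but the correct dividing line is abelian versus non-abelian rather than rank at least $2$: the groups $\Z^k$ with all labels equal to $2$ are spherical of rank $k$ yet contain no free subgroup, and you do in fact treat that abelian case separately.
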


We now look at formulae in dihedral Artin groups. We start with the following observation.

\begin{remark}
\label{rmk: FC centre of odd dihedral Artin group coincides with centre}
    The FC-centre of any dihedral Artin group $G(m)$ coincides with its centre $\mathrm{Z}(G(m))$.
    
    This follows from the exact sequences (\ref{eq: exact sequence for G(m) is central extension of free product}) and (\ref{eveneq: exact sequence for G(m) is central extension of free product}).
    If $m = 2$ then $G(m)$ is $\mathbb{Z}^2$. For any $m\geq 3$, both $\Z/2\Z\ast \Z/m\Z$, for $m$ odd, and $\Z/p\Z\ast \Z$, for $m$ even, are icc (\cite[Example 5.15]{P}). Therefore $G(m)/\mathrm{Z}(G(m))$ is icc in both cases. If $g\notin \mathrm{Z}(G(m))$, its class $\overline{g}$ in $G(m)/\mathrm{Z}(G(m))$ is non-trivial and hence, since $G(m)/\mathrm{Z}(G(m))$ is icc, the conjugacy class of $\overline{g}$ in this quotient is infinite. It follows that the conjugacy class of $g$ is infinite. 
\end{remark}

From the previous remark and \cite[Proposition 3.4]{CP} we immediately get:

\begin{corollary}
    Positive universal formulae are concise in dihedral Artin groups.
\end{corollary}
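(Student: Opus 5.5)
The statement rests on the fact, recorded in Remark~\ref{rmk: FC centre of odd dihedral Artin group coincides with centre}, that in every dihedral Artin group $G(m)$ the FC-centre coincides with the centre $\mathrm{Z}(G(m))$. The plan is therefore to check the hypotheses of \cite[Proposition 3.4]{CP} and then run a Schur-type reduction.

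First step: every positive universal formula is concise in any group $G$ with $\mathrm{FC}(G)=\mathrm{Z}(G)$ and torsion-free centre. By Schur's reduction (Lemma~\ref{lem:schur_reduction}) it suffices to show the following. If $G_\varphi$ is finite, then $G_\varphi$ contains no element of infinite order.

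So let $g\in G_\varphi$ have infinite order. If $g\notin \mathrm{Z}(G)$, then $g\notin\mathrm{FC}(G)$, so its conjugacy class is infinite. Since $G_\varphi$ is closed under conjugation, $G_\varphi$ is infinite.

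If instead $g$ is central, we use the form of $\varphi$:
\[
\forall \overline{y}:\bigvee_j \bigwedge_i w_{j,i}(g,\overline{y})=1.
\]
Because $g$ is central, each equation can be rewritten as $g^{m_{j,i}}=\tilde w_{j,i}(\overline{y})$. This is the step I expect to be the main obstacle: showing that infinitely many central powers of $g$ also satisfy $\varphi$. The difficulty is that universal quantification ranges over all tuples, so we cannot simply pick witnesses as in Proposition~\ref{prop: existential formulae are concise in groups embedding nicely in AxK}.

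I would follow the idea of \cite{CP}. Fix a tuple $\overline{h}$ and split into two cases.
\begin{itemize}
\item If some disjunct contains only exponents $m_{j,i}=0$, then that disjunct is a statement about $\overline{h}$ alone, independent of $g$.
\item Otherwise, each equation with $m_{j,i}\neq 0$ pins $g^{m_{j,i}}$ to a value determined by $\overline{h}$.
\end{itemize}
Choosing $\overline{h}$ cleverly (for instance trivial, or tuples in the centre) should force $g$ to lie in a torsion situation unless some disjunct is independent of $x$. If a disjunct is independent of $x$ for all $\overline{h}$, then every central element, and in particular every power $g^N$, satisfies $\varphi$; this gives infinitely many elements because the centre is infinite cyclic. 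Alternatively, since this is exactly the content of \cite[Proposition 3.4]{CP}, I would simply invoke it once its hypotheses are verified.

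Second step: verify those hypotheses for $G(m)$. For $m=2$ the group is $\Z^2$, which is abelian, and all formulae are concise in abelian groups by \cite{CP}. For $m\geq 3$, Remark~\ref{rmk: FC centre of odd dihedral Artin group coincides with centre} gives $\mathrm{FC}(G(m))=\mathrm{Z}(G(m))$. The centre is $\langle x^2\rangle\cong\Z$ by \eqref{eq: exact sequence for G(m) is central extension of free product}, respectively $\langle y^p\rangle\cong\Z$ by \eqref{eveneq: exact sequence for G(m) is central extension of free product}, so it is torsion-free. Moreover $G(m)$ is torsion-free, being a one-relator group without a proper-power relator. With these facts the cited proposition applies directly and the corollary follows.
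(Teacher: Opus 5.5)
Your proposal follows the paper's proof. The paper derives the corollary immediately from Remark~\ref{rmk: FC centre of odd dihedral Artin group coincides with centre}, i.e.\ $\mathrm{FC}(G(m))=\mathrm{Z}(G(m))$, together with \cite[Proposition 3.4]{CP}, and your argument also rests on that citation in the end. Two small points, neither of which affects the cited route:

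\begin{itemize}
\item Your heuristic for the central case would not stand on its own. The disjunct that holds varies with $\overline{y}$, so testing trivial or central tuples only controls those tuples. A self-contained fix would translate $\overline{y}$ by central tuples, apply B.~H.~Neumann's covering lemma, and use $\mathrm{Z}(G(m))\cap[G(m),G(m)]=1$.
\item In the even case the centre is $\langle x^p\rangle$, not $\langle y^p\rangle$. This is a typo you inherited from \eqref{eveneq: exact sequence for G(m) is central extension of free product}.
\end{itemize}
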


Restricting to existential formulae, we can show that these are concise in odd dihedral Artin groups and, more generally, in torus knot groups, where, for any two coprime integers $n, m > 1$, the torus knot group $T(n,m)$ is given by the presentation
\begin{equation}
    \label{eq: presentation of torus knot group}
    T(n,m):= \langle x,y \; | \; x^{n} = y^{m} \rangle.
\end{equation}
The following is a special case of more general embedding results in \cite{CGL}.

\begin{proposition}
\label{fact: odd dihedral AG embeds in direct product}

 The torus knot group $T(n,m)$ embeds as a subgroup of finite index into $\Z\times (\Z/n\Z\ast\Z/m\Z)$ via the following map $\iota$.
 
 Let $z$ be the generator of $\Z$ and let $\Z/n\Z\ast \Z/m\Z=\langle a, b\mid a^n=b^m=1\rangle$. Using the presentation (\ref{eq: presentation of torus knot group}), define $\iota$ as follows: 
 \begin{align*}
     \iota(x)&:=(z^m, a)\\
     \iota(y)&:=(z^n, b).
 \end{align*}
\end{proposition}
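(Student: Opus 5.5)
The plan is to check the three properties directly: $\iota$ is a well-defined homomorphism, it is injective, and its image has finite index. Throughout we work with the presentation (\ref{eq: presentation of torus knot group}) and write $\pi_2\colon \Z\times(\Z/n\Z\ast\Z/m\Z)\to \Z/n\Z\ast\Z/m\Z$ for the projection onto the second factor.

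\emph{Well-definedness.} We only need the defining relator to be respected. We have $\iota(x)^n=(z^{mn},a^n)=(z^{mn},1)$ and $\iota(y)^m=(z^{nm},b^m)=(z^{mn},1)$. These agree, so by von Dyck's theorem $\iota$ extends to a homomorphism $T(n,m)\to \Z\times(\Z/n\Z\ast\Z/m\Z)$.

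\emph{Injectivity.} This is the main point, and the plan is to use the central element $c:=x^n=y^m$.
\begin{itemize}
\item The element $c$ commutes with $x$ and with $y$, so it is central in $T(n,m)$. Hence the normal closure of $c$ is just the cyclic group $\langle c\rangle$.
\item Adding the relation $c=1$ to the presentation gives $\langle x,y\mid x^n=1,\ y^m=1\rangle\cong \Z/n\Z\ast\Z/m\Z$, with $x\mapsto a$ and $y\mapsto b$. This identification is exactly $\pi_2\circ\iota$.
\item Therefore $\ker(\pi_2\circ\iota)=\langle c\rangle$, and $\ker\iota\subseteq\langle c\rangle$.
\item On $\langle c\rangle$ we have $\iota(c^k)=(z^{mnk},1)$, which is trivial only when $k=0$ because $z$ has infinite order. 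So $\ker\iota=\{1\}$.
\end{itemize}

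\emph{Finite index.} Let $H=\iota(T(n,m))$.
\begin{itemize}
\item $\pi_2(H)$ contains $a$ and $b$, so $\pi_2|_H$ is surjective.
\item $H$ contains $\iota(c)=(z^{mn},1)$.
\item Take any $(z^k,g)$ in the direct product. Choose $h=(z^j,g)\in H$ with the same second coordinate. Then $(z^k,g)=(z^{k-j},1)\,h$, and $(z^{k-j},1)$ lies in $H$ up to multiplication by an element of $\{(z^r,1): 0\le r<mn\}$.
\item Hence $H$ has at most $mn$ cosets, so the index is finite.
\end{itemize}
Coprimality of $n$ and $m$ is not needed for this index bound; it would only matter if one wanted to compute the index exactly.

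No step is a serious obstacle. The only point needing care is the identification of $T(n,m)/\langle c\rangle$ with the free product via $\pi_2\circ\iota$, and this follows immediately from the presentations.
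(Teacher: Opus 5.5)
Your proof is correct. It differs from the paper only because the paper gives no argument of its own: it quotes the embedding as a special case of the more general embedding results of \cite{CGL}. You instead verify everything directly from the presentation:
\begin{enumerate}
\item von Dyck's theorem gives well-definedness;
\item the central element $c=x^n=y^m$, with $T(n,m)/\langle c\rangle\cong\Z/n\Z\ast\Z/m\Z$ realised by $\pi_2\circ\iota$, reduces injectivity to the first coordinate, where $\iota(c^k)=(z^{mnk},1)$;
\item surjectivity of $\pi_2|_H$ together with $(z^{mn},1)\in H$ gives finite index.
\end{enumerate}

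What your route buys is a short, self-contained and elementary proof. It also shows that coprimality of $n$ and $m$ plays no role in the embedding. As a by-product it yields $H\cap(\Z\times 1)=\iota(\langle c\rangle)=\langle(z^{mn},1)\rangle$, which is exactly the identification the paper uses right after the proposition. What the citation buys is placing this map inside a broader family of embeddings of such central extensions into direct products, which the paper relies on when discussing dihedral Artin groups more generally.

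One small correction to your closing remark: your argument already computes the index exactly, with no coprimality needed. Since $\pi_2|_H$ is onto, the direct product equals $(\Z\times 1)H$. Hence the index equals $[\Z\times 1 : H\cap(\Z\times 1)]=[\Z:mn\Z]=mn$.
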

 In particular, since the centre of $T(n,m)$ is generated by $x^n$, we can identify $\mathrm{Z}(T(n,m))$ inside $\Z\times(\Z/n\Z\ast \Z/m\Z)$ with the subgroup generated by $(z^{nm},1)$.

\torusknotgroups*

\begin{proof}
    Let $G:=T(n,m)$ be a torus knot group, $\varphi$ be an existential formula, and let $g$ be an element of $G_\varphi$ of infinite order.
    One can argue as in Remark \ref{rmk: FC centre of odd dihedral Artin group coincides with centre} to show that $\mathrm{FC}(G)=\mathrm{Z}(G)$, so we can assume that $g\in \mathrm{Z}(G)$.
    Thanks to Proposition \ref{fact: odd dihedral AG embeds in direct product} we can conclude using Proposition \ref{prop: existential formulae are concise in groups embedding nicely in AxK}.   
\end{proof}

\end{document}